\begin{document}
\theorembodyfont{\normalfont}
\theoremheaderfont{\itshape}
\theoremseparator{:}
\newtheorem{theorem}{\indent Theorem}
\newtheorem{lemma}{\indent Lemma}
\newtheorem{proposition}{\indent Proposition}
\newtheorem{corollary}{\indent Corollary}
\newtheorem{definition}{\indent Definition}
\newtheorem{remark}{\indent Remark}
\newtheorem{example}{\indent Example}
\newtheorem*{proof}{Proof}
\def\QEDclosed{\mbox{\rule[0pt]{1.5ex}{1.5ex}}} 
\def\QED{\QEDclosed} 
\def\endproof{\hspace*{\fill}~\QED\par\endtrivlist\unskip}
\newcommand\blfootnote[1]{%
\begingroup
\renewcommand\thefootnote{}\footnote{#1}%
\addtocounter{footnote}{-1}%
\endgroup
}
\title{The Controllability and Structural Controllability of Laplacian Dynamics}
\author{Jijun Qu, Zhijian Ji, Yungang Liu, and Chong Lin
\thanks{E-mail address: jizhijian@pku.org.cn. (Zhjian Ji)
This work was supported by the National Natural Science Foundation of China (Grant Nos. 61873136 and 62033007), Taishan Scholars Climbing Program of Shandong Province of China and Taishan Scholars Project of Shandong Province of China (No. ts20190930).
Jijun Qu, Zhijian Ji (Corresponding author) and Chong Lin are with Institute of Complexity Science, College of Automation, Qingdao University, and Shandong Key Laboratory of Industrial Control Technology. Yungang Liu is with School of Control Science and Engineering, Shandong University.}}\maketitle
\begin{abstract}
In this paper, classic controllability and structural controllability under two protocols are investigated. For classic controllability, the multiplicity of eigenvalue zero of general Laplacian matrix $L^*$  is shown to be determined by the sum of the numbers of zero circles, identical nodes and opposite pairs, while it is always simple for the Laplacian $L$ with diagonal entries in absolute form. For a fixed structurally balanced topology,  the controllable subspace is proved to be invariant even if the antagonistic weights are selected differently under the corresponding protocol with $L$.   For a graph expanded from a star graph rooted from a single leader, the dimension of controllable subspace is  two under the protocol associated with $L^*$. In addition, the system is structurally controllable under both protocols if and only if the topology without unaccessible nodes is connected. As a reinforcing case of structural controllability, strong structural controllability requires the system to be controllable for any choice of weights. The connection between father nodes and child nodes affects  strong structural controllability because it determines the linear relationship of the control information from father nodes. This discovery is a major factor in establishing the sufficient conditions on strong structural controllability for multi-agent systems under both protocols, rather than for complex networks, about latter results are already abundant.
\end{abstract}
\section{Introduction}
Controllability is a research topic that attracts more and more attention in multi-agent system, which directly promotes the control and design of the system. An important origin of multi-agent systems is biological groups, such as birds, fish, bees and ant colonies, etc. The dynamic evolution of such systems is closely related to the information transmission relationship among agents. In particular, the distributed controllability based on neighbors ensures that the network system has the ability to obtain behavior goals, such as consensus, formation control and stabilization \cite{newcyb,conliu1,consensusliu,fc,stab2}, etc.

The controllability of multi-agent systems was first considered to general networks \cite{tanner}. It is known that the upper and lower bounds of controllable subspace can be determined by almost equitable partition and distance partition, respectively \cite{cesar,distance,zhangshuo}. Special topologies-path, circle and star graphs were concerned in \cite{pc,cao} which provided sufficient and necessary conditions for controllability. The antagonistic network was first considered by Altafini \cite{biconsensus}, in which the concept of structural balance was proposed for consensus problems. The controllability of antagonistic networks was also considered in \cite{sun}, wherein an equivalence was developed for the controllability of antagonistic networks and general networks. The controllability of signed networks was developed in \cite{signed1,signed2,signed3,signed4}. Equitable partitions and symmetric topologies were used to analyze the controllability in \cite{jimeng}.

The concept of structural controllability was proposed by Lin, who gave the necessary and sufficient conditions for complex networks to have this basic property \cite{lin}. A system is structurally controllable if and only if there is at least one choice of weights such that it is controllable. The structural controllability of multi-agent systems was also considered in \cite{sc}. Inspired by the above work, the developed work of structural controllability for multi-agent systems was  exhibited in \cite{sc1}, which, however, dose not pay enough attention to the relationship between complex networks and multi-agent systems. It is known from \cite{lin,sc,sc1,scguan} that the existence of self-loops makes the topology of multi-agent system free of dilation, which makes the controllability necessary condition of $rank(L;B)=n$ always true, while complex networks do not have this feature. Therefore, the controllability and strong structural controllability of multi-agent systems are easier to realize than complex networks. Based on the work of structural controllability, strong structural controllability was concerned in \cite{ssc,zeroforce1,zeroforce2}.  As a reinforcing case of structural controllability, strong structural controllability requires the system to be controllable for any choice of weights. By color change rule, if all nodes are black, the system is strongly structurally controllable, and the leader set is called as a zero forcing set \cite{zeroforce1}. It was pointed that zero forcing set is a special case of balancing set which is in terms of the connection between father nodes and child nodes to construct a strongly structurally controllable system \cite{zeroforce2}. Although zero forcing set and balancing set can provide sufficient and necessary conditions for strong structural controllability, unfortunately, under the two protocols discussing in this paper, these sets only arise sufficient conditions. This means that protocols bring a non-negligible impact on controllability. The strong structural controllability of multi-agent systems, signed networks and linear systems was also considered in \cite{com1,com2,com3,com4,com5,com6}.

Under the two protocols, classic controllability, structural controllability and strong structural controllability may be inconsistent or unified, which is the main issue discussed in this paper. Below are the contributions.
 \begin{itemize}
   \item The classical controllability under different protocols is proved to be different, which is due to the different multiplicity of the zero eigenvalue of Laplacian matrix under different protocols. We establish a necessary and sufficient condition for the multiplicity of zero eigenvalue, which leads to a sufficient condition for the uncontrollability of the system. 
   \item Necessary and sufficient conditions are derived for the dimension of controllable subspace under two different protocols. In addition, it is proved that the controllable subspace is invariant even for different choices of antagonistic weights.
   \item A unified result of structural controllability under the two protocols is given in the form of necessary and sufficient conditions, which shows that the system is structurally controllable if and only if the weighted graph with no unaccessible node is connected.
   \item We give a lower bound on the dimension of the controllable subspace of strong structural controllability under the two protocols and two sufficient conditions for the strong structural controllability.
 \end{itemize}

The rest of this paper is documented as follows.  In Section 2, the graph theory and two different models are presented. Section 3 shows the affection of two protocols on controllability.  The structural controllability of weighted graphs is considered in Section 4.  Finally, the conclusions are arranged in Section 5.
\section{The Model of Laplacian Dynamics}
By a graph we mean a pair $\mathcal{G}=\{\mathcal{V},\mathcal{E}\}$ consisting of node set $\mathcal{V}$ and edge set $\mathcal{E}$, where $\mathcal{V}=\{v_1,\ldots,v_n\}$, and $\mathcal{E}\subseteq \mathcal{V}\times\mathcal{V}$. Let $A$ denote the adjacency matrix with entry $a_{ji}\ne 0$ for $(v_i,v_j)\in\mathcal{E}$ in directed graphs. For undirected graphs, $a_{ij}=a_{ji}\ne0$ when $(v_i,v_j)\in\mathcal{E}$ or $(v_j,v_i)\in\mathcal{E}$. The degree matrix is followed as $D:=diag\{d_1,\ldots,d_n\}:=\{d_i|d_i=\sum_{j\in \mathcal{N}(i)}a_{ij}\textit{ or }\sum_{j\in \mathcal{N}(i)}|a_{ij}|\}$, where $\mathcal{N}(i)$ represents the set of adjacent agents of agent $i$.
For a signed network, to guarantee the consensus, the diagonal entries of Laplacian matrix are always positive. The corresponding dynamics is
\begin{equation}\label{ab}
  \!\dot x_i(t)\!\!=\!-\!\!\!\!\!\!\sum\limits_{j\in \mathcal{N}(i)}\!\!\!(|a_{ij}|x_i(t)-a_{ij}x_j(t))+b_iu_i(t),i\!=\!1,\ldots,n,
\end{equation}
where $u_i(t)$ is the external input for $v_i$ and $b_i=1$ if node $i$ is a leader, otherwise $b_i=0$. The term $|a_{ij}|$ ensures that every feedback to agent $i$ is negative, so that the system is stable. Different from consensus, it is not necessary to take positive magnitude on the diagonal entries of Laplacian matrix when discussing controllability. Thus, another Laplacian dynamics is arisen as follows.
\begin{equation}\label{nonab}
 \dot x_i(t)\!=\!-\!\!\!\!\sum\limits_{j\in \mathcal{N}(i)}a_{ij}(x_i(t)-x_j(t))+b_iu_i(t),i=1,\ldots,n.
\end{equation}
The compact form of \eqref{ab} is
\begin{equation}\label{protocol}
  \dot x=-Lx+Bu,
\end{equation}
where $L=D-A$, the entries of $L$ can be represented as
\begin{equation}
{l_{ij}}= \left\{ \begin{array}{*{20}{c}}
{-a_{ij},} & \textrm{$i\ne j;$}\\
 \sum\limits_{j\in \mathcal{N}(i)}|a_{ij}|,& \textrm{$i=j.$}
\end{array} \right.
\end{equation}
Similarly, dynamics \eqref{nonab} can be rewritten as
\begin{equation}
  \dot x=-L^*x+Bu,
\end{equation}
where $L^*=D-A$, the entries of $L^*$ are expressed by
\begin{equation}
{l_{ij}^*}= \left\{ \begin{array}{*{20}{c}}
{-a_{ij},} & \textrm{$i\ne j;$}\\
{\sum\limits_{j\in \mathcal{N}(i)}{a_{ij} },}& \textrm{$i=j.$}
\end{array} \right.
\end{equation}
\begin{remark}
The following will prove that the controllability under protocol \eqref{ab} differs from that under protocol \eqref{nonab}, while the structural controllability is unified under both protocols, as well as the strong structural controllability.
\end{remark}
\section{Controllability of unweighed graphs}
\subsection{Controllability Under Different Protocols}
\begin{definition}
For a circle with even number of nodes and half weights of positive, if the positive and negative weights are in alternant order or there are only two adjacent pairs of positive and negative weights, then we call this circle zero circle.
\end{definition}
\begin{figure}[!h]
  \centering
    \subfigure[Antagonistic weights in adjacent order;]{\label{adgraph}
  \includegraphics[width=1in]{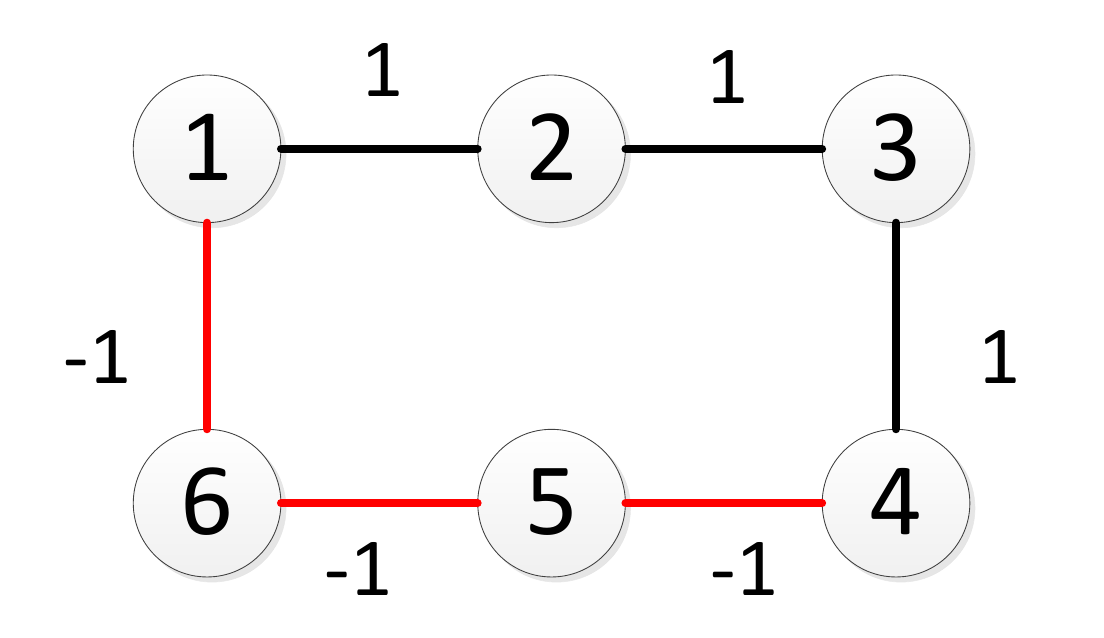}}\hspace{0.4in}
  \subfigure[Antagonistic weights in alternant order.]{
  \label{algraph}
  \includegraphics[width=1in]{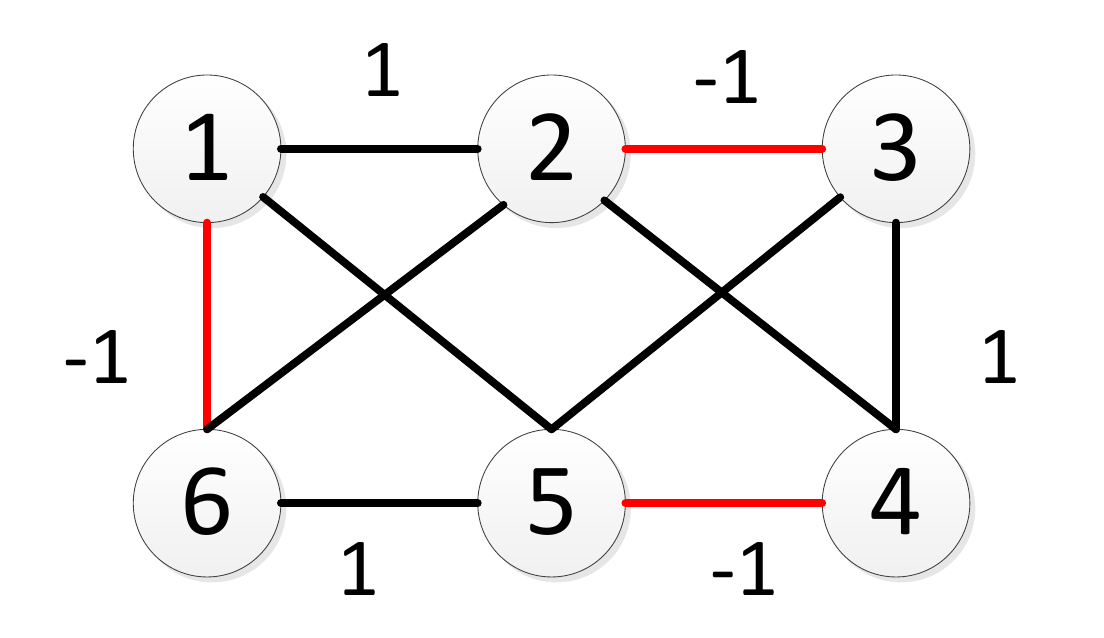}}\caption{Zero circles.}\label{twosub}
\end{figure}

For a zero circle in case that positive and negative weights are in alternant order, we allow nodes of even order to be connected with the same weight, so is to nodes of odd order. Two cases of zero circles are presented in Fig. \ref{twosub}, where the red line represents negative weight.

\begin{lemma}\label{zero}
The multiplicity of eigenvalue zero associated with $L^*$ is two for a zero circle graph.
\end{lemma}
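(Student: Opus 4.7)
The plan is to analyze $\ker L^*$ directly and show it is exactly two dimensional. The all-ones vector $\mathbf{1}$ always lies in $\ker L^*$ because each row of $L^*$ sums to zero by construction, so the multiplicity of $0$ is at least one. Hence the whole task reduces to producing exactly one more linearly independent null vector and ruling out any further ones.

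Label the cycle $v_1,\dots,v_n$ with $n$ even, and let $a_i$ be the signed weight of edge $(v_i,v_{i+1})$ (indices mod $n$). The $i$th row of $L^*x=0$ reads $(a_{i-1}+a_i)x_i = a_{i-1}x_{i-1}+a_i x_{i+1}$, which rearranges into
\[
a_i(x_{i+1}-x_i)=a_{i-1}(x_i-x_{i-1}).
\]
Introducing the discrete difference $y_i:=x_{i+1}-x_i$, the recurrence becomes $a_iy_i=a_{i-1}y_{i-1}$, so the quantity $a_iy_i$ is constant around the cycle: $a_iy_i=a_1y_1$, i.e.\ $y_i=a_1y_1/a_i$.

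The cycle closure condition $\sum_{i=1}^n y_i=0$ (required because $x_{n+1}=x_1$) therefore collapses to the single scalar equation $a_1y_1\sum_{i=1}^n 1/a_i=0$. The crux is to verify that $\sum_{i=1}^n 1/a_i=0$ in both configurations of a zero circle. Since the section treats unweighted graphs (so $a_i\in\{+1,-1\}$) and exactly $n/2$ weights are positive and $n/2$ negative, the reciprocal sum is $n/2-n/2=0$ in either the alternant pattern or the two-adjacent-pair pattern; the symmetry asserted in the paper for the alternant case (equal weights on the odd/even edges) is precisely what makes this cancellation survive if one later allows non-unit magnitudes. Thus $y_1$ becomes a free parameter.

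Finally, I would close with the upper bound: any $x\in\ker L^*$ is determined by the two scalars $x_1$ and $y_1$ via $x_{i+1}=x_i+y_i$ and the explicit formula $y_i=a_1y_1/a_i$, and the closure identity just verified ensures consistency. Combined with the independence of $\mathbf{1}$ (obtained by $y_1=0$, $x_1=1$) and the vector obtained by $y_1=1$, $x_1=0$, this gives $\dim\ker L^*=2$. I expect the main obstacle to be the uniform handling of the two geometric patterns in the single algebraic step $\sum 1/a_i=0$; once that is isolated as the key identity, everything else is bookkeeping on the first-order recursion.
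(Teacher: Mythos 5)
Your proof is correct, and it takes a genuinely different route from the paper's. The paper writes out the explicit Laplacian for each of the two zero-circle patterns separately and argues rank deficiency by inspecting row dependencies (for the alternant case it asserts that the even rows are linearly dependent and likewise the odd rows; for the adjacent-pair case it deletes two rows and tracks the symmetry $x_i=x_{n-i}$ of the remaining system). You instead work uniformly: the $i$th row of $L^*x=0$ on a cycle is exactly $a_i(x_{i+1}-x_i)=a_{i-1}(x_i-x_{i-1})$, so $a_iy_i$ is constant around the cycle and the only obstruction to a second kernel vector is the closure identity $c\sum_i a_i^{-1}=0$. This buys several things the paper's argument does not: a clean upper bound (any kernel vector is determined by $(x_1,y_1)$, so the multiplicity is exactly two rather than merely at least two), a single computation covering both patterns, and in fact a complete characterization showing that for $\pm1$ weights the multiplicity is two whenever half the edge signs are negative, regardless of their arrangement around the cycle --- which reveals that the two specific patterns singled out in the paper's definition are not essential to the conclusion of this lemma. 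Two small points you should make explicit in a final write-up: (i) the identification of the kernel dimension with the multiplicity of the eigenvalue zero uses the symmetry of $L^*$ for undirected graphs (the paper makes the same silent identification), and (ii) your observation that the cancellation survives only for equal magnitudes on the odd and even edge classes is exactly the role of the paper's ``same weight'' convention in the alternant case, so you are right to flag $\sum_i a_i^{-1}=0$ as the one identity on which everything hinges.
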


\begin{proof}
In case that the positive and negative weights are in alternant order, the Laplacian matrix is
{\begin{footnotesize}\begin{equation*}
L^*=\left( {\begin{array}{*{20}{r}}
0&{ - 1}&0&0& \cdots &0&0&0&1\\
{ - 1}&0&1&0& \cdots &0&0&0&0\\
0&1&0&{ - 1}& \cdots &0&0&0&0\\
 \vdots & \vdots & \vdots & \vdots & \ddots & \vdots & \vdots & \vdots & \vdots \\
0&0&0&0& \cdots &{ - 1}&0&1&0\\
0&0&0&0& \cdots &0&1&0&{ - 1}\\
1&0&0&0& \cdots &0&0&{ - 1}&0
\end{array}} \right).
\end{equation*}\end{footnotesize}}
It can be derived that the even rows are linearly dependent, so is to the odd rows. Thus, $rank(L^*)=n-2$, and the multiplicity of eigenvalue zero is two. In case that there are only two adjacent pairs of positive and negative weights, the Laplacian matrix is
{\begin{scriptsize}\begin{equation*}
L^*\!=\!\left(\!\!\! {\begin{array}{*{20}{r}}
0&{ - 1}&0& \cdots &0&0&0&0&0& \cdots &0&1\\
{ - 1}&2&{ - 1}& \cdots &0&0&0&0&0& \cdots &0&0\\
 \vdots & \vdots & \vdots & \ddots & \vdots & \vdots & \vdots & \vdots & \vdots & \ddots & \vdots & \vdots \\
0&0&0& \cdots &{ - 1}&2&{ - 1}&0&0& \cdots &0&0\\
0&0&0& \cdots &0&{ - 1}&0&1&0& \cdots &0&0\\
0&0&0& \cdots &0&0&1&{ - 2}&1& \cdots &0&0\\
 \vdots & \vdots & \vdots & \ddots & \vdots & \vdots & \vdots & \vdots & \vdots & \ddots & \vdots & \vdots \\
1&0&0& \cdots &0&0&0&0&0& \cdots &1&{ - 2}
\end{array}} \right),
\end{equation*}\end{scriptsize}}
where the diagonal entries of the first row and the $(n/2+1)$th row are both 0. Let $\bar L$ denote the matrix obtained from $L^*$ by deleting the first and the $(n/2+1)$th rows. Hence, $rank(\bar L)=n-2$, and there are two linearly independent solutions for $\bar Lx=0$. In addition, by the structure of $\bar L$, there is $x_i=x_{n-i}$, $i=2,\ldots,{(n/2-1)}$, where $x_i$ is a component of $x$. As a consequence, $Lx=0$ still holds.
\end{proof}

There are the other special cases which arise repeated eigenvalue zero. For Fig. \ref{identicial},
\begin{equation*}
L^*=\left(
  \begin{array}{rrrrrr}
    -3 & 0 & 0 & 1 & 1 & 1 \\
    0 & -3 & 0 & 1 & 1 & 1 \\
    0 & 0 & -3 & 1 & 1 & 1 \\
    1 & 1 & 1 & -1 & -1 & -1 \\
    1 & 1 & 1 & -1 & -1 & -1 \\
    1 & 1 & 1 & -1 & -1 & -1 \\
  \end{array}
\right),\end{equation*} \begin{equation*}L_s=\left(
  \begin{array}{cccccc}
-1 & -1 & -1 \\
-1 & -1 & -1 \\
-1 & -1 & -1 \\
  \end{array}
\right).
\end{equation*}
It can be seen that $L_s$ is a submatrix of $L^*$, with $rank(L_s)=1.$ The multiplicity of eigenvalue zero associated with $L_s$ is two, and the corresponding eigenvectors $x_s$ are $\{1,-1,0\}^T$ and $\{1,0,-1\}^T$. Moreover, $\{0,0,0,0,1,-1,0\}^T$ and $\{0,0,0,0,1,0,-1\}^T$ are the solutions of $L^*x=0$. 
\begin{figure}[!h]
  \centering
  \includegraphics[width=0.9in]{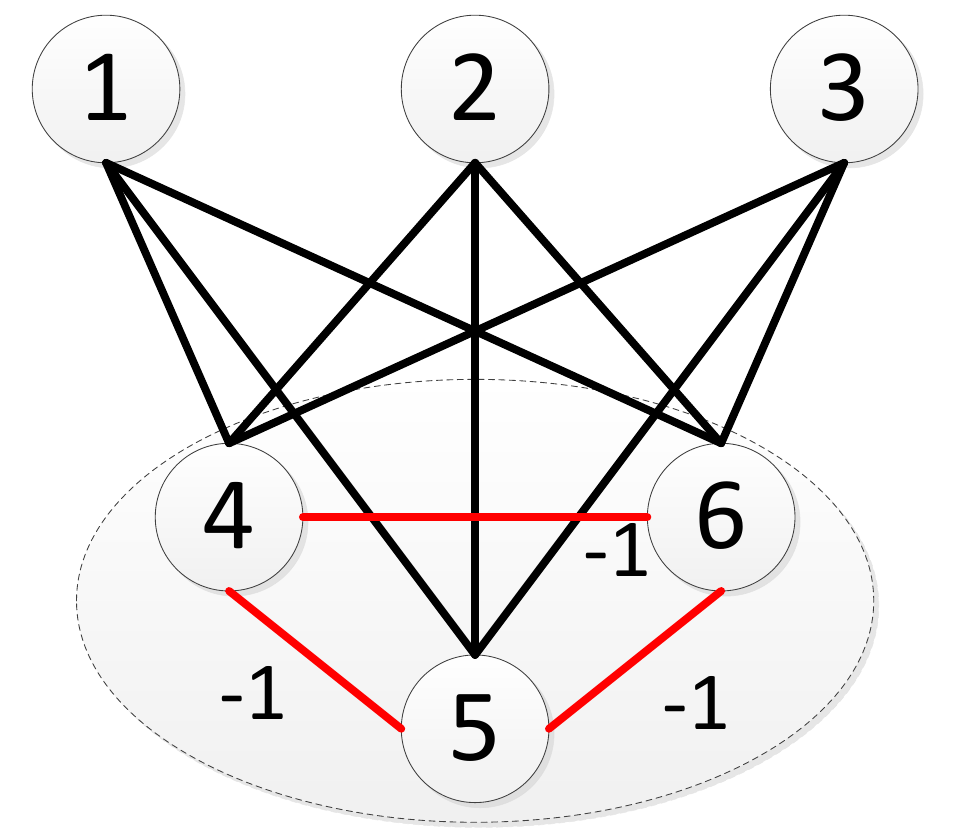}\\
  \caption{Identical nodes.}\label{identicial}
\end{figure}
\begin{definition}
 $\mathcal{G}$ is said to be a complete graph if each node of $\mathcal{G}$ is connected with all the other nodes.
\end{definition}
\begin{definition}
 Let $\mathcal{G}_s$ be a complete subgraph of $\mathcal{G}$, with all weights being the same value $\alpha$  and $\mathcal{V}_s\subset\mathcal{V}$. If any two nodes in $\mathcal{V}_s$ are connected with the same $|\mathcal{V}_s|$ nodes in $\mathcal{V}/\mathcal{V}_s$ and the weights of the corresponding edges are the identical value $-\alpha$ , then we say those nodes in $\mathcal{G}_s$ are identical nodes.
\end{definition}
\begin{definition}
Let $\mathcal{V}_t$ denote a  set of nodes whose each node is connected with nodes $i$ and $j$.
If $a_{ik}=-a_{jk}$ for $\forall k\in\mathcal{V}_t$, we call this class pair of nodes $i,j$ opposite pair.
\end{definition}

The opposite pairs also take more zero eigenvalues rising in $L^*.$ For the graph in Fig. \ref{opposite}, there are
\begin{equation*}
 L^*= \left(
     \begin{array}{*{5}{r}}
       -2 & 1 & 1 & 0 & 0 \\
       1 & -2 & 1 & 1 & -1 \\
       1 & 1 & -2 & -1 & 1 \\
       0 & 1 & -1 & 1 & -1 \\
       0 & -1 & 1 & -1 & 1 \\
     \end{array}
   \right), L_p= \left(
     \begin{array}{*{5}{r}}
1 & -1 \\
-1 & 1 \\
     \end{array}
   \right).
\end{equation*}
It can be seen that $L_p$ is a submatrix of $L^*$. If $x_p=\{1,1\}^T$, there is $L_px_p=0$. Meanwhile, if $x=\{\mathbf{0},x_p^T\}^T=\{0,0,0,1,1\}^T$, there is $L^*x=0.$  
\begin{figure}[h]
  \centering
  \includegraphics[width=0.9in]{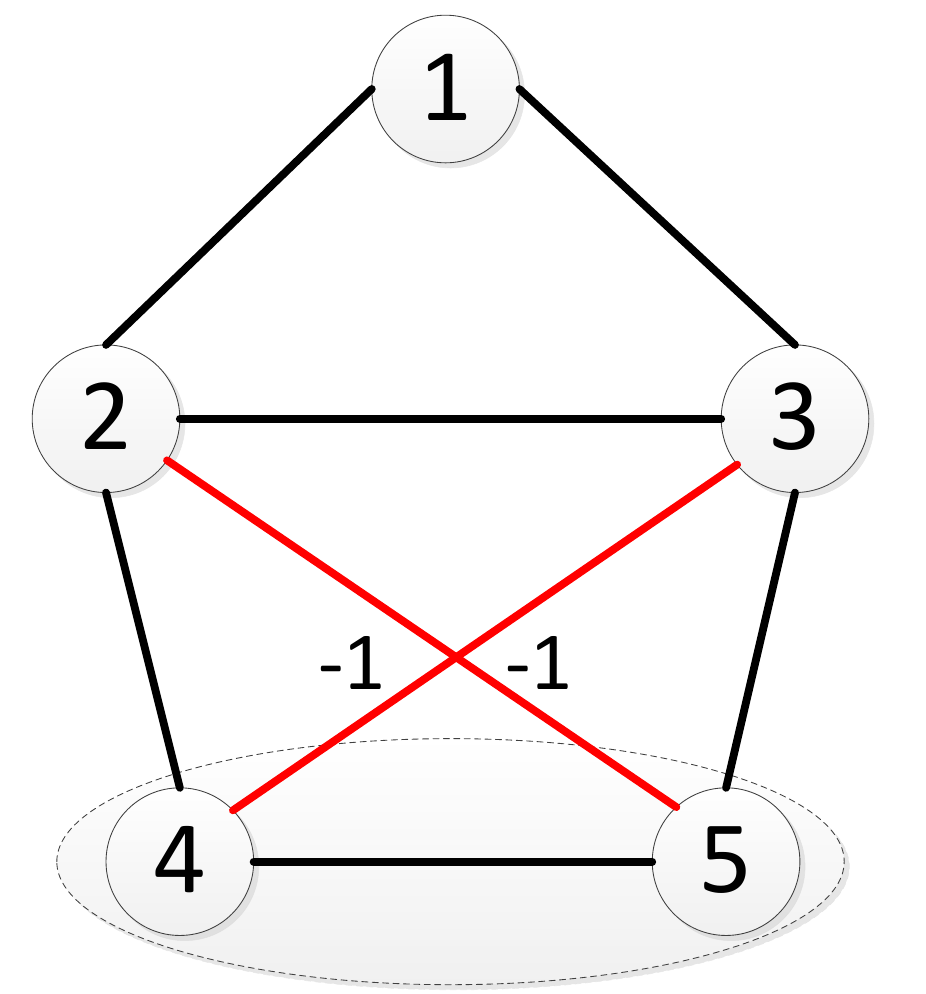}\\
  \caption{A pair of opposite nodes.}\label{opposite}
\end{figure}
\begin{lemma}\label{zeroc}
The multiplicity of eigenvalue zero associated with $L^*$ is $k$ if and only if $k$ is the sum of the numbers of zero circles, opposite pairs and identical nodes in $\mathcal{G}.$
\end{lemma}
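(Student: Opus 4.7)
The plan is to prove the equality of the zero-eigenvalue multiplicity of $L^*$ with the structural count via two inclusions: a sufficiency step that builds an explicit independent family of vectors in $\ker L^*$, and a necessity step that shows every element of $\ker L^*$ decomposes into such vectors.

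For sufficiency, I would treat the three motifs in turn and produce one independent kernel vector per counted feature. A zero circle already yields the required vector by the construction in the proof of Lemma \ref{zero}. For a set $\mathcal{V}_s$ of identical nodes with internal weight $\alpha$ and shared external weight $-\alpha$, I would verify that each vector $e_i - e_j$ with $i,j\in\mathcal{V}_s$ lies in $\ker L^*$: the row equations outside $\mathcal{V}_s$ vanish because every external neighbor connects to $i$ and $j$ through the common weight $-\alpha$, while the row equations inside $\mathcal{V}_s$ vanish by the symmetric action of $\alpha$ on the uniform sub-block (exactly the pattern already worked out in the $L_s$ example preceding the lemma). For an opposite pair $(i,j)$ with common neighborhood $\mathcal{V}_t$ satisfying $a_{ik}=-a_{jk}$, the vector $e_i+e_j$ lies in $\ker L^*$ because at every $k\in\mathcal{V}_t$ the two incidences cancel by antagonism, and the rows indexed by $i$ and $j$ reduce to $l^*_{ii}+l^*_{ij}=0$ by direct computation. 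The linear independence of the assembled family follows from their essentially disjoint support patterns.

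For necessity, I would take an arbitrary $x\in\ker L^*$ and show it lies in the span of the structural vectors above. The starting point is the row equation $\sum_{j\in\mathcal{N}(i)}a_{ij}(x_i-x_j)=0$, which forces $x_i=x_j$ along any chain of identically signed edges, so that any nontrivial kernel direction must exploit a sign cancellation at some vertex. By locating such a cancellation pattern one extracts an occurrence of a zero circle, identical-node cluster, or opposite pair, whose associated structural vector can be subtracted from $x$ to reduce to a smaller instance to which induction on $|\mathcal{V}|$ applies. The main obstacle I anticipate is the bookkeeping when motifs overlap -- for instance when an opposite pair shares nodes with a zero circle, or when identical-node clusters are nested -- since one must verify that the structural count remains additive and that no hidden kernel direction escapes the classification. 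A careful choice of quotient at each inductive step, one that preserves the $L^*$-structure on the reduced graph, will be the technical heart of the necessity argument.
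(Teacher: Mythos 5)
Your sufficiency half is essentially the paper's argument made more explicit: one kernel vector per motif, namely the even/odd vectors of Lemma \ref{zero} for a zero circle, $e_i-e_j$ for identical nodes, and $e_i+e_j$ for an opposite pair. Two small cautions there. First, Lemma \ref{zero} concerns an \emph{isolated} zero circle; when the circle is embedded in a larger connected graph you must check that its kernel vectors extend (the paper does this by showing the row equations force $x_i=x_j=x_q$ on the nodes outside the circle, so only one extra direction per circle survives, giving $k_1+1$ rather than $2k_1$). Second, ``essentially disjoint supports'' does not settle independence, since the paper's own remark allows two zero circles to share a node; this needs a sentence.

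The genuine gap is in your necessity direction. The entire content of the ``only if'' claim is that \emph{no other} graph structure can contribute to $\ker L^*$, i.e.\ that every sign-cancellation pattern solving $\sum_{j\in\mathcal{N}(i)}a_{ij}(x_i-x_j)=0$ is accounted for by a zero circle, an identical-node cluster, or an opposite pair. You state this as ``by locating such a cancellation pattern one extracts an occurrence of'' one of the three motifs, and then defer the overlap bookkeeping and the choice of quotient as ``the technical heart'' --- but that extraction step is precisely the theorem, and nothing in the proposal indicates how it would be carried out or why the list of three motifs is exhaustive. A reader cannot verify a subtract-and-induct argument whose induction step is unproved. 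Note also that the paper takes a different (perturbation-style) route here: it argues that once a motif's structure is ``destructed'' --- an extra chord in a zero circle, a changed link or weight among identical nodes or an opposite pair --- the corresponding kernel direction collapses and all components become equal. Whatever one thinks of the rigor of that argument, it at least engages with why the count drops when the structures are absent; your proposal, as written, does not yet contain an argument for the necessity direction at all.
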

\begin{proof}
Let us first prove the ``if'' part.  Assume that there are $k_1$ zero circles in a connected graph, then there are some nodes of zero circles connected with more than two nodes. Let $Z_i$ denote the node set of zero circle $z_i$, and $M=V\setminus \bigcup_{i=1}^{k_1} Z_i$. There is no circle consisting of both the nodes of $Z_i$ and $M$. Thus, for the first case that positive and negative weights are in alternant order, it can be derived $x_i=x_j=x_q$ for $L^*x=0$, where $(i,q)\in\mathcal{E}$, and node $j$ is reachable from node $i$ with $i,j\in M$, $q\in Z_i$. By Lemma \ref{zero}, if there are $k_1$ zero circles, there are $k_1+1$ linearly independent solutions of $L^*x=0$. For the second case of adjacent pairs of antagonistic weights, there is also $x_i=x_j=x_q$ for $L^*x=0$, where $(i,q)\in\mathcal{E}$, node $j$ is reachable from node $i$, and $i,j\in M$, $q\in Z_i$. If there are $k_2$ identical nodes, then there are another $k_2-1$ zero eigenvalues of $L^*.$ In addition, $k_3$ opposite pairs mean $k_3$ individual zero eigenvalues. Hence, there are $k=k_1+k_2+k_3$ zero eigenvalues.

For the part ``only if'', assume that the structures of zero circles are destructed. Then, for the first case, there is an additional connection between even order node and odd order node. If $Lx=0$, the components of $x$ corresponding to the zero circle are the same, that is, $rank(L)=n-1.$ For the second case, if $Lx=0$ with $x_m=x_h$, $(m,h)\in\mathcal{E}$ and $m,h\in Z_i$, then all the entries of $x$ corresponding to $Z_i$ are the same. Once the structure of zero circle is destructed, there exists one node $j\in Z_i$ with $|\mathcal{N}(j)\bigcap Z_i|>2$. Based on $x_p=x_{|Z_i|-p}$, there is  $x_j=x_{\mathcal{N}(j)\bigcap Z_i}$. Let the $m$th node and the $j$th node of $Z_i$ be connected, with $j>m$. Then, $x_m=x_{m+1}=\cdots=x_{j-1}=x_{j}$, and accordingly all the entries of $x$ corresponding to $Z_i$ are the same. The above cases do not consider identical nodes and opposite pairs. For identical nodes and opposite pairs, once the links or weights are changed, the solutions of $L_sx_s=0$ and $L_px_p=0$ do not coincide the principle of $Lx=0.$
\end{proof}
\begin{remark}
It is worthy to note that $|Z_i\bigcap Z_j|\in\{0,1\}$ which means that two zero circles can share at most one common node. Besides, $k_1,k_2,k_3>0.$
\end{remark}

It is well known that the system with repeated eigenvalues requires more inputs to ensure controllability than the system with simple ones. Since zero circles, identical nodes and opposite pairs take more zero eigenvalues, these structures are disadvantage for controllability. Note that for the Laplacian $L$ under dynamics \eqref{ab}, the eigenvalue zero is always simple \cite{biconsensus}.
{\begin{theorem}\label{subspace}
Under dynamics \eqref{nonab}, the system is uncontrollable if $q<k$, where $q$ is the number of inputs and $k$ is the sum of the numbers of zero circles, opposite pairs and identical nodes.
\end{theorem}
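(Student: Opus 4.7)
The plan is to invoke the Popov-Belevitch-Hautus (PBH) eigenvalue test and reduce uncontrollability to a dimension count on the left null space of $L^*$, using Lemma \ref{zeroc} as the essential input.

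First I would observe that the system is $\dot x = -L^* x + Bu$, so by the PBH test it is controllable if and only if the $n\times(n+q)$ matrix $[\,\lambda I + L^*,\; B\,]$ has rank $n$ for every complex $\lambda$. Equivalently, there is no nonzero row vector $v^T$ with $v^T L^* = \lambda v^T$ and $v^T B = 0$. Specializing to $\lambda = 0$, uncontrollability follows as soon as we can exhibit a nonzero vector $v$ in the left null space of $L^*$ that is also orthogonal to every column of $B$.

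Next I would apply Lemma \ref{zeroc}: the algebraic multiplicity of the zero eigenvalue of $L^*$ equals $k$, the sum of the numbers of zero circles, identical nodes, and opposite pairs. Since the examples built in the excerpt (zero circles, the identical-node block $L_s$, and the opposite-pair block $L_p$) each produce linearly independent null vectors of $L^*$, the geometric multiplicity also equals $k$. Let $V\in\mathbb{R}^{n\times k}$ be a matrix whose columns form a basis for the left null space of $L^*$, so that $V^T L^* = 0$ and $\mathrm{rank}(V)=k$.

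Then I would finish with a simple dimension argument. Consider the $q\times k$ matrix $B^T V$. If $q<k$ then $\mathrm{rank}(B^T V) \le q < k$, so there exists a nonzero $\alpha\in\mathbb{R}^k$ with $B^T V\alpha = 0$. Setting $v = V\alpha$ yields $v\ne 0$ (since $V$ has full column rank), $v^T L^* = 0$, and $v^T B = 0$. Hence $[\,L^*,\;B\,]$ has a nontrivial left null vector, its rank is strictly less than $n$, and the PBH test fails at $\lambda=0$. Therefore the system is uncontrollable whenever $q<k$.

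I expect no serious obstacle here, since the heavy lifting has already been done in Lemma \ref{zeroc}; the only point that deserves a careful sentence is that the $k$ null vectors produced there are genuinely linearly independent (they live in disjoint coordinate blocks corresponding to distinct zero circles, identical-node cliques, and opposite pairs), so that the null space truly has dimension $k$ and the rank-nullity count above goes through.
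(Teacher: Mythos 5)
Your proposal is correct and follows essentially the same route as the paper: both invoke the PBH test at $\lambda=0$ and combine the rank deficiency $\mathrm{rank}(L^*)=n-k$ from Lemma \ref{zeroc} with a counting argument showing that $q<k$ input columns cannot restore full row rank. Your dual formulation via a left null vector $v=V\alpha$ annihilated by $B$ is in fact a cleaner and more explicit rendering of the paper's somewhat informal transformation-and-counting step.
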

\begin{proof}
Assume that the sum of the numbers of zero circles, opposite pairs and identical nodes is $k$, and there are $q$ inputs. By PBH test, only if $rank[\lambda I+L^*|B]=n$ for $\forall \lambda\in\mathbb{C}$, the system is controllable. It follows from Lemma \ref{zeroc} that $rank[\lambda I+L^*]=n-k$ for $\lambda=0$. Then, by elementary transformation, there is \[U^{-1}[\lambda I+L^*]U=\left( {\begin{array}{*{20}{c}}
{{\Lambda_{n - {k}}}}&0\\
0&0
\end{array}} \right),\]
where $\Lambda_{n - {k}}$ is an upper triangle matrix. Hence, $rank[\lambda I+L^*|b]=rank[U^{-1}[\lambda I+L^*]U|U^{-1}b]$ for single input $b$. And $rank[U^{-1}[\lambda I+L^*]U|U^{-1}b]= n-k+1$ if there is one nonzero entry of $U^{-1}b$ corresponding to the zero row of $U^{-1}[\lambda I+L^*]U$. As a consequence, for multiple inputs $B\in\mathbb{R}^{n\times q}$ with $q<k$, if the nonzero entries  of $U^{-1}B$ correspond to distinct zero rows of $U^{-1}[\lambda I+L^*]U$, there is $w= n-\sum_{i=1}^{i=p}(k-q)<n$, where $w$ is the dimension of controllable subspace.
\end{proof}}{
\begin{definition}[Altafini\cite{biconsensus}]
A signed graph $\mathcal{G}$ is said to be structurally balanced if it admits a bipartition of the nodes $\mathcal{V}_1$, $\mathcal{V}_2$, with $\mathcal{V}_1\cup\mathcal{V}_2=\mathcal{V},$ $\mathcal{V}_1\cap\mathcal{V}_2=\emptyset$, such that $\forall v_i,$ $v_j\in \mathcal{V}_q$ $(q\in\{1,2\}),a_{ij}\geq 0$, and $\forall v_i\in \mathcal{V}_q,$ $v_j\in\mathcal{V}_r,$ $q\neq r$ $(q,r\in\{1,2\}),$ $a_{ij}\leq 0$. It is said to be structurally unbalanced otherwise.
\end{definition}}
\begin{lemma}[Altafini\cite{biconsensus}]\label{sc}
A connected signed graph $\mathcal{G}$ is structurally balanced if and only if any of the following equivalent conditions holds:
\begin{enumerate}[1)]
  \item all cycles of $\mathcal{G}$ are positive;
  \item $\exists C=diag\{\sigma_1,\ldots,\sigma_n\}$, $\sigma_i\in\{\pm 1\}$ such that $CAC$ has all nonnegative entries;
  \item zero is an eigenvalue of $L$.
\end{enumerate}
\end{lemma}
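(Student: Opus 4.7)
The plan is to close the chain \emph{balanced} $\Rightarrow$ 2) $\Rightarrow$ 1) $\Rightarrow$ \emph{balanced} and then establish \emph{balanced} $\Leftrightarrow$ 3) separately. For \emph{balanced} $\Rightarrow$ 2), given a bipartition $\mathcal{V}_1,\mathcal{V}_2$, I would set $\sigma_i=+1$ if $v_i\in\mathcal{V}_1$ and $\sigma_i=-1$ if $v_i\in\mathcal{V}_2$; then $(CAC)_{ij}=\sigma_i\sigma_j a_{ij}\geq 0$, since $\sigma_i\sigma_j$ and $a_{ij}$ share a sign both in the same-set case ($\sigma_i\sigma_j=+1$, $a_{ij}\geq 0$) and in the cross-set case ($\sigma_i\sigma_j=-1$, $a_{ij}\leq 0$). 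For 2) $\Rightarrow$ 1), rewriting $a_{ij}=\sigma_i\sigma_j(CAC)_{ij}$ shows that along any cycle $v_{i_1}\to\cdots\to v_{i_k}\to v_{i_1}$ the product of edge signs collapses to $\prod_t \sigma_{i_t}\sigma_{i_{t+1}}$, in which each $\sigma_{i_t}$ appears exactly twice, giving $+1$. For 1) $\Rightarrow$ \emph{balanced}, I would pick a spanning tree $T$ of $\mathcal{G}$ rooted at $v_1$, set $\sigma_1=1$, and propagate $\sigma_i=\sigma_{p(i)}\,\mathrm{sign}(a_{i,p(i)})$ along $T$; positivity of the fundamental cycle formed by any non-tree edge $(i,j)$ with the tree forces $\mathrm{sign}(a_{ij})=\sigma_i\sigma_j$ on that edge as well, and the partition $\mathcal{V}_q=\{v_i:\sigma_i=(-1)^{q-1}\}$ then satisfies the structural-balance condition.

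For \emph{balanced} $\Rightarrow$ 3), reusing the $C$ above, I would rewrite $CLC=D-CAC$ and observe that $(CAC)_{ij}=|a_{ij}|$ on every edge, so each row of $CLC$ sums to $d_i-\sum_j(CAC)_{ij}=d_i-\sum_j|a_{ij}|=0$; hence $CLC\mathbf{1}=0$, which is equivalent to $C\mathbf{1}\in\ker L$. For the converse 3) $\Rightarrow$ \emph{balanced}, let $x\neq 0$ satisfy $Lx=0$, pick $i^{\ast}$ with $|x_{i^{\ast}}|=\max_i|x_i|$, and WLOG assume $x_{i^{\ast}}>0$. The $i^{\ast}$-row of $(D-A)x=0$ yields
\begin{equation*}
|x_{i^{\ast}}|\sum_j|a_{i^{\ast} j}|=\Bigl|\sum_j a_{i^{\ast} j}x_j\Bigr|\leq\sum_j|a_{i^{\ast} j}||x_j|\leq|x_{i^{\ast}}|\sum_j|a_{i^{\ast} j}|,
\end{equation*}
so equality holds throughout. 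This simultaneously forces $|x_j|=|x_{i^{\ast}}|$ for every neighbor $j$ of $i^{\ast}$ and $\mathrm{sign}(a_{i^{\ast} j})=\mathrm{sign}(x_j)\mathrm{sign}(x_{i^{\ast}})$ on each incident edge. Iterating the argument at each such neighbor (now itself a maximizer) and invoking connectedness of $\mathcal{G}$, every $|x_i|$ equals a common positive constant $c$, so $x_i=c\sigma_i$ with $\sigma_i\in\{\pm 1\}$ and $\mathrm{sign}(a_{ij})=\sigma_i\sigma_j$ on every edge, which produces the bipartition $\mathcal{V}_1=\{v_i:\sigma_i=+1\}$, $\mathcal{V}_2=\{v_i:\sigma_i=-1\}$ certifying structural balance.

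The hard part is the equality-case analysis in 3) $\Rightarrow$ \emph{balanced}: one has to extract both the magnitude equality $|x_j|=|x_{i^{\ast}}|$ and the sign relation $\mathrm{sign}(a_{i^{\ast} j}x_j)=\mathrm{sign}(x_{i^{\ast}})$ from the saturation of the two triangle inequalities, and then globalize these local data along connected paths without admitting $x_i=0$ at intermediate vertices---a scenario excluded only because the common magnitude $c$ is strictly positive and propagates through the whole graph. The remaining implications are largely bookkeeping once the $\sigma$-assignment is in hand, and the algebraic identity $CLC=D-CAC$ carries \emph{balanced} $\Rightarrow$ 3) in a single line.
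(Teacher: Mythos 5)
Your proof is correct, and on the crucial implication it takes a genuinely different route from the paper. The paper closes the loop via the chain 2) $\Rightarrow$ structural balance $\Rightarrow$ 1), 2) $\Rightarrow$ 3), and 3) $\Rightarrow$ 2); its argument for 3) $\Rightarrow$ 2) simply asserts that the simple zero eigenvalue of $L$ has eigenvector $C1_n$ and reads off nonnegativity of $CAC$ from $CLC1_n=0$ --- which presupposes the very signature matrix $C$ whose existence is to be established. You instead extract the signs from an arbitrary kernel vector: the maximum-modulus/triangle-inequality saturation at a maximizing vertex forces $|x_j|=|x_{i^\ast}|$ and $\mathrm{sign}(a_{i^\ast j})=\mathrm{sign}(x_j)\mathrm{sign}(x_{i^\ast})$ on incident edges, and connectedness globalizes this to a genuine $\pm1$ signature. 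This is self-contained and repairs the circularity. Your 1) $\Rightarrow$ balanced via a spanning tree with sign propagation and fundamental-cycle consistency is likewise tighter than the paper's informal ``negative weights are pairwise arisen'' partitioning, and your 2) $\Rightarrow$ 1) telescoping product $\prod_t\sigma_{i_t}\sigma_{i_{t+1}}=+1$ is a cleaner direct link than the paper's detour through structural balance. The only thing the paper's route buys is brevity on balanced $\Rightarrow$ 3) (identical to yours) and an explicit appeal to the Fiedler-value characterization of connectedness, which your argument replaces by the propagation of the common magnitude $c>0$. One minor point worth making explicit in your equality-case analysis: saturation of the first inequality requires that \emph{all} nonzero terms $a_{i^\ast j}x_j$ share the sign of $x_{i^\ast}$, and you should note $d_{i^\ast}>0$ (guaranteed by connectedness for $n\geq2$) so that the maximizer actually has neighbors to propagate to; with that said, the argument is complete.
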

\begin{proof}
1) $\Leftrightarrow$ structural balance, 2) $\Rightarrow$ structural balance, 1) and 2) $\Rightarrow$ 3), 3) $\Rightarrow$ 2), 2) $\Rightarrow$ structural balance $\Rightarrow$ 1). Hence, 1) $\Leftrightarrow$ 2) $\Leftrightarrow$ 3) $\Leftrightarrow$ structural balance.

The part 1) $\Rightarrow$ 3) has been verified in \cite{biconsensus}.

[ 1) $\Leftrightarrow$ structural balance] If the negative weights in circles are pairwise arisen, then these circles are positive. Assume that there are $p$ nodes in one positive circle, such that $a_{i_1i_2}a_{i_2i_3}\cdots a_{i_{p-1}i_p}$ $a_{i_pi_1}>0$, and $a_{k(k+1)}<0,$ $a_{h(h+1)}<0$. The nodes of circle can be partitioned into two parts, $v_i\in\mathcal{V}_1$, $i=k+1,\ldots,h$, and $v_{j}\in\mathcal{V}_2$, $j=h+1,\ldots,k$. The case of one pair of negative weights is the same as that of more than one pair of negative weights. Suppose that circles of structurally unbalanced $\mathcal{G}$ are positive. Since the topology is structurally unbalanced, the number of negative weights in one circle is odd, which is a contradiction to positive circle.

[ 2) $\Rightarrow$ structural balance] If $A$ is nonnegative, then all of nodes belong to $\mathcal{V}_1$, and $\mathcal{V}_2=\emptyset$, with all circles being positive. Assume that there exists a diagonal matrix $C=diag\{\sigma_1,\ldots,\sigma_n\}$, $\sigma_i\in\{\pm 1\}$ such that $CAC$ is nonnegative. Take $l$ nodes into one set, $v_i\in\mathcal{V}_1$, $i\in N_l=\{n_1,\ldots,n_l\}$, and the others into another set, $v_j\in\mathcal{V}_2$, $j\in N_{\neg l}=\{n_{l+1},\ldots,n_n\}$. Let the diagonal entries of $C$ corresponding to $\mathcal{V}_1$ be $-1$, and the others be $1$. $r_i=\{a_{i1},a_{i2},\ldots,a_{in}\},$ $i=n_1,n_2,\ldots,n_l$, $c_j=\{a_{1j},a_{2j},\ldots,a_{nj}\},$ $j=n_1,n_2,\ldots,n_l$, where $a_{ij}$ is the entry of $A$. Since $CAC=\bar A\geq0$, then $\bar r_i=-r_i\geq0$, $\bar c_j=-c_j\geq0.$ If $i=j$; $i,j\in N_l$, then $a_{ij}=0$. For the case of $i\neq j$; $i,j\in N_l$, there is $(-1)\cdot a_{ij}\cdot(-1)\geq 0$. For $i\neq j$; $i\in N_l,$ $j\in N_{\neg l}$ or $i\neq j$; $i\in N_{\neg l},$ $j\in N_l$, there is $(-1)\cdot a_{ij}\geq 0$.  It follows from the above that if $i\neq j$; $i,j\in N_l$, $a_{ij}\geq 0$. And if $i\neq j$; $i\in N_l,$ $j\in N_{\neg l}$ or $i\neq j$; $i\in N_{\neg l},$ $j\in N_l$, $a_{ij}\leq 0$. Once $i\neq j$; $i,j\in N_{\neg l}$, $a_{ij}\geq 0$.

[ 2) $\Rightarrow$ 3)] Since $\exists C=diag\{\sigma_1,\ldots,\sigma_n\}$, $\sigma_i\in\{\pm 1\}$ such that $CAC$ is nonnegative, then $CLC=\bar L$, $L\sim \bar L$. Thus, $L$ and $\bar L$ share a common eigenvalue zero.

[ 3) $\Rightarrow$ 2)] The topology $\mathcal{G}$ is connected if and only if the second smallest eigenvalue $\lambda_{2}(L)\ne0.$ Since zero is a simple eigenvalue of $L$ for connected $\mathcal{G}$, the corresponding eigenvector is $C1_n$. Based on $CLC1_n=\bar L1_n=0$, it can be derived that $CAC$ is nonnegative.
\end{proof}
\begin{remark}
The proof of Lemma \ref{sc} in \cite{biconsensus} is incomplete, and the above proof overcomes this shortcoming.
\end{remark}
\begin{definition}
Given the fixed topologies, if system $(L;B)$ is controllable for any choice of weights, we say $(\mathcal{G},\mathcal{V}_L)$ is controllable, where $\mathcal{V}_L$ is the leader set.
\end{definition}
\begin{theorem}\label{scontrollable}
Under structurally balanced topology and dynamics \eqref{ab}, $(\mathcal{G},\mathcal{V}_L)$ is controllable if and only if there exists one set of weights to render system controllable.
\end{theorem}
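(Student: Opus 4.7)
The plan is to exploit the gauge-transformation characterization of structural balance provided by Lemma~\ref{sc}(2). The $\Rightarrow$ implication is immediate from the definition of $(\mathcal{G},\mathcal{V}_L)$-controllability, so the substance lies entirely in the $\Leftarrow$ direction: I would need to show that controllability under one admissible weight assignment forces controllability under every admissible weight assignment compatible with the fixed structurally balanced topology.

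My first step would be to invoke Lemma~\ref{sc}(2) to obtain, from the bipartition $\mathcal{V}_1\cup\mathcal{V}_2=\mathcal{V}$, a signature matrix $C=\mathrm{diag}(\sigma_1,\ldots,\sigma_n)$ with $\sigma_i\in\{\pm 1\}$---determined solely by the bipartition---for which $\bar{A}:=CAC$ has all nonnegative entries. Because $C^2=I$ and $C$ commutes with the diagonal matrix $D$, one obtains $CLC=D-\bar A$; and since dynamics~\eqref{ab} uses absolute values on the diagonal, $|\bar A_{ij}|=|a_{ij}|$ implies that $D$ equals the row-sum diagonal of $\bar A$, so $\bar L:=CLC$ is the standard unsigned Laplacian on the underlying graph, carrying the same edge magnitudes as $\mathcal{G}$. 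The state transformation $x\mapsto Cx$ then carries $(L,B)$ into the similar pair $(\bar L, CB)$, and since $CB$ merely sign-flips selected columns of $B$ its column range coincides with that of $B$; consequently the controllable subspaces of $(L,B)$ and $(\bar L, B)$ share the same dimension.

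The decisive observation would be that both $C$ and $\bar L$ depend only on the bipartition and the edge magnitudes, not on the particular signed choice of antagonistic weights. Every weight assignment compatible with the structurally balanced topology reduces, through the same conjugation by $C$, to the same unsigned system $(\bar L, B)$; so controllability of $(L,B)$ under one admissible choice is equivalent to controllability of $(\bar L, B)$, which in turn is equivalent to controllability of $(L',B)$ under any other admissible choice. The hard part will be pinning down precisely the meaning of ``any choice of weights'' in the theorem's definition so that this argument closes: the gauge reduction succeeds cleanly on the reading that the edge magnitudes act as fixed structural parameters while the antagonistic sign pattern varies subject to structural balance. Were the magnitudes also permitted to vary, an additional genericity-style argument---exploiting that the rank of the controllability matrix is polynomial in the weights and that the simplicity of the zero eigenvalue of $L$ under~\eqref{ab} rules out the multiplicity-raising structures from Lemma~\ref{zeroc}---would be needed to upgrade weight-wise equivalence to full invariance.
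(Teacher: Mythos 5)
Your proposal follows essentially the same route as the paper: both rest on Lemma~\ref{sc}, part 2), conjugating $L$ by the signature matrix $C$ so that every structurally balanced sign assignment with the same edge magnitudes reduces to the same unsigned Laplacian $\bar L$, whence controllability of $(L;B)$ and $(\bar L;B)$ coincide. Your write-up is somewhat more explicit than the paper's (tracking $CB$ versus $B$ and flagging that the edge magnitudes must be treated as fixed for the equivalence to close), but the underlying argument is the same.
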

\begin{proof}
 By Lemma \ref{sc}, if one topology is structurally balanced, there is $CAC=\bar A$, where the nonzero entries in $\bar A$ are nonnegative. Thus, under structurally balanced topology, $CLC=\bar L$ always holds regardless of the weights in $A$. Based on $CLC=\bar L$, it can be derived that if we change the signs of some weights in $L$, $\bar L$ still shares common eigenvalues with $L$, and only the signs of the entries in eigenvectors are different. For structurally balanced topologies, if $(L;B)$ is controllable, $(\bar L;B)$ is always controllable. Consequently, $(\mathcal{G},\mathcal{V}_L)$ is controllable. If $(\mathcal{G},\mathcal{V}_L)$ is controllable, there is a choice of weights such that $(L;B)$ is controllable, where $\mathcal{V}_L$ is associated with leaders.
\end{proof}
\begin{remark}
Theorem 3 in \cite{sun} is a special case of Theorem \ref{scontrollable} in this paper. Even leaders are chosen from both $\mathcal{V}_1$ and $\mathcal{V}_2$, the controllability of $(L;B)$ and $(\bar L;B)$ is also equivalent.
\end{remark}
\subsection{Controllable Subspaces Under Different Protocols}
\begin{figure}[!h]
  \centering
\subfigure[Two cells;]{
\includegraphics[width=1.1in]{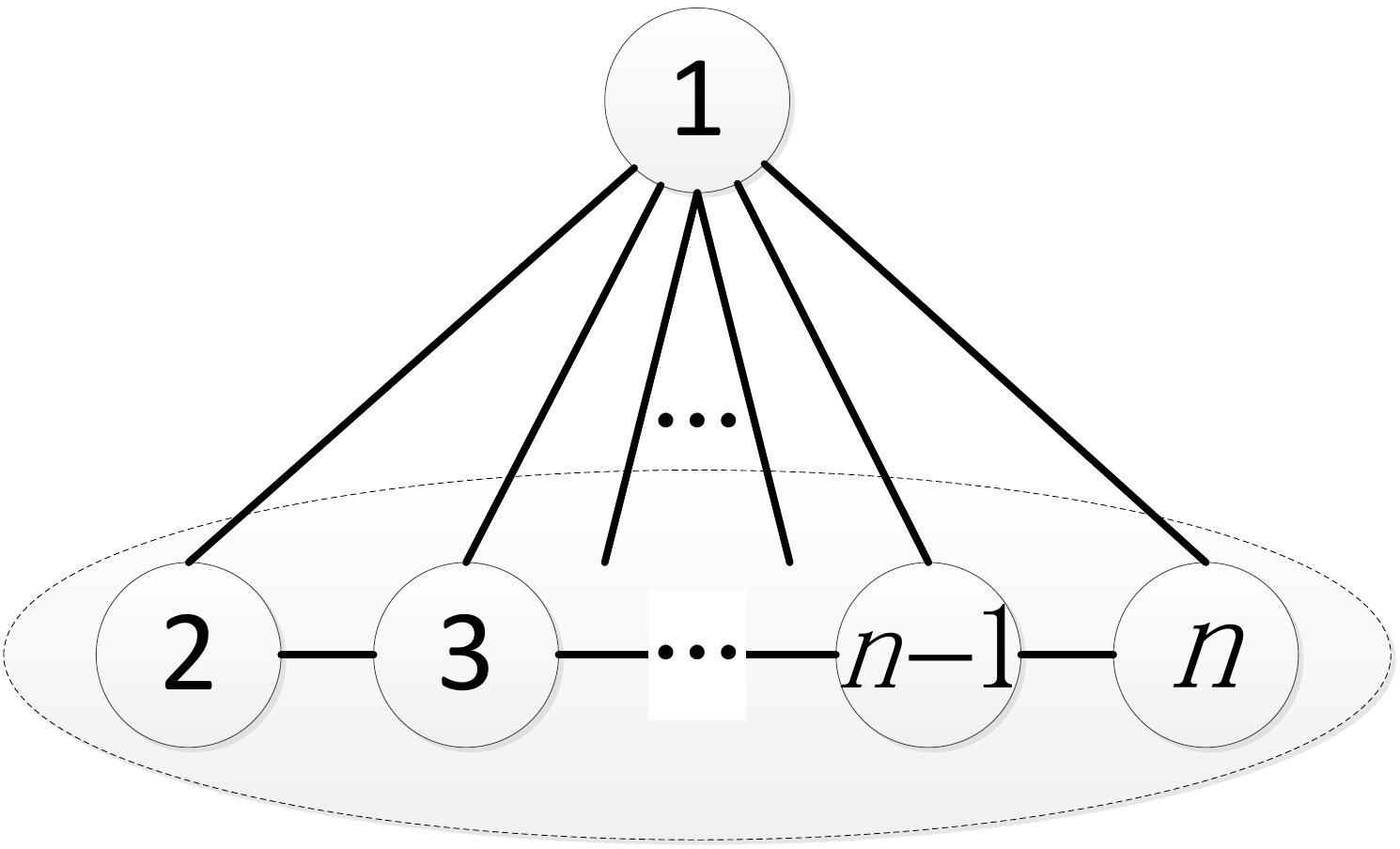}\label{tc}}\hspace{0.3in}
\subfigure[A star graph.]{
\includegraphics[width=1in]{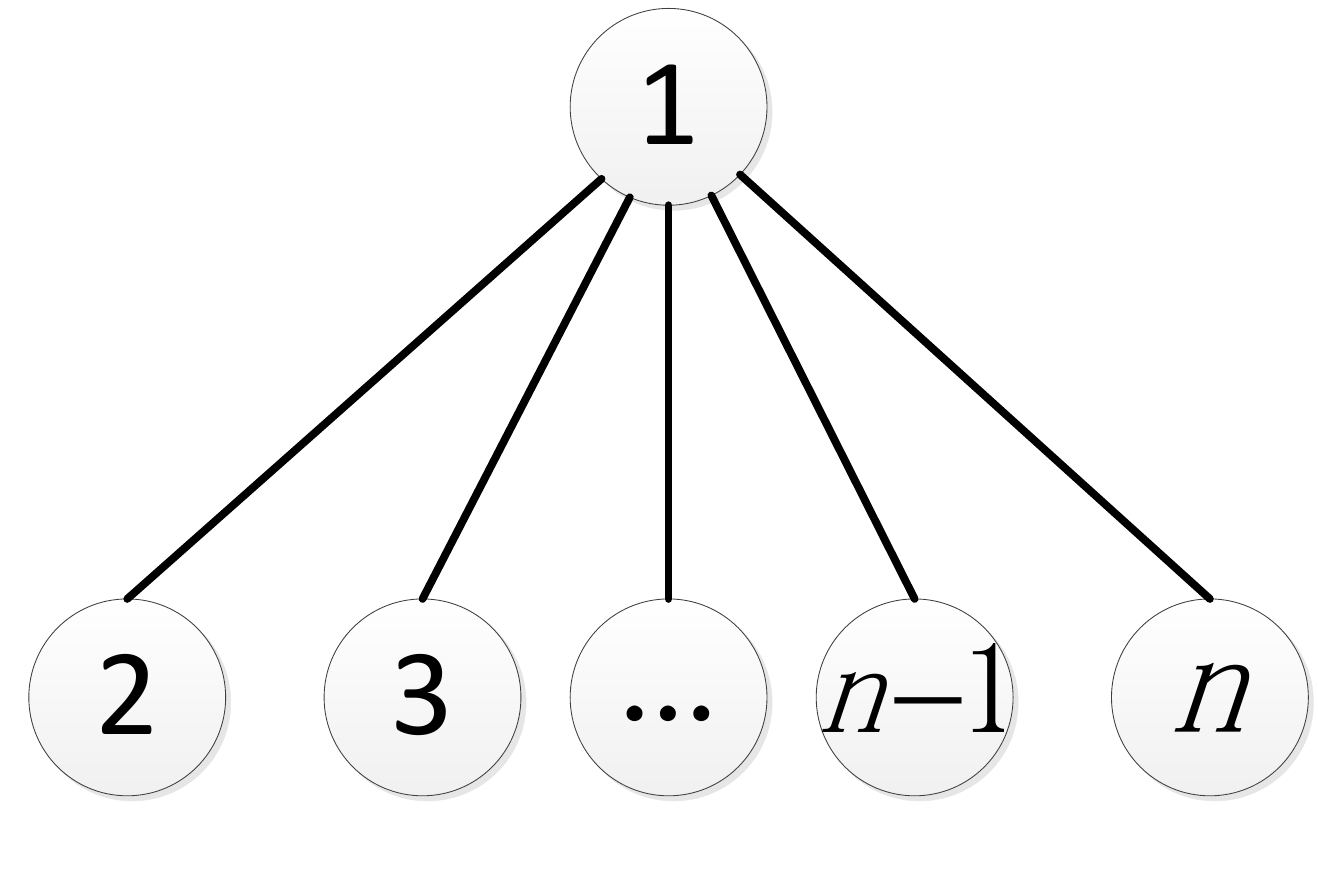}\label{star}}
\caption{Systems with the same controllable subspace.}
\end{figure}

For both Figs. \ref{tc} and \ref{star}, if the links from node 1 are weighted by the same value, the dimension of controllable subspace is determined under protocol \eqref{nonab}.  If the center node of a star graph is chosen as the single leader, then this star graph is called as a T-star graph.
{\begin{proposition}\label{dimensions2}
 For either of the following two scenarios
\begin{enumerate}[i)]
  \item under dynamics \eqref{nonab};
  \item under dynamics \eqref{ab} and structurally balanced topology,
\end{enumerate}
the dimension of controllable subspace is greater than two if and only if the connected graph is not expanded by a T-star graph.
\end{proposition}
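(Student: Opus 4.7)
The plan is to recognize both scenarios as instances of the equitable-partition framework. The natural partition to consider is $\pi = \{\{v_1\}, V \setminus \{v_1\}\}$, placing the leader alone in one cell and every other node in the second cell; the key claim is that $\pi$ is equitable with respect to the relevant Laplacian exactly when the graph is an expansion of a T-star, i.e., when the leader is joined to every non-leader by an edge of a common weight.

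For scenario (i) under $L^*$, the ``if'' direction goes as follows. Whenever the graph is expanded from a T-star, the partition $\pi$ is equitable, so the two-dimensional subspace $\mathcal{S} = \operatorname{span}\{e_1, \mathbf{1}_{V \setminus \{v_1\}}\}$ is $L^*$-invariant and contains $B = e_1$; hence the Krylov sequence, and therefore the controllable subspace, lies in $\mathcal{S}$. Connectedness guarantees $L^* e_1 \notin \operatorname{span}\{e_1\}$, so the dimension is exactly two. For the ``only if'' direction, I argue the contrapositive: if the T-star structure fails, I produce a third vector in the Krylov subspace lying outside $\operatorname{span}\{e_1, L^* e_1\}$. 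Concretely, if the controllable subspace were two-dimensional then $(L^*)^2 e_1 = \alpha e_1 + \beta L^* e_1$ for some scalars $\alpha, \beta$; expanding this identity at each position $i \ne v_1$ yields the scalar condition $a_{i1}(d_1 + d_i - \beta) = \sum_{j \ne v_1, i} a_{ij}a_{j1}$, and requiring a single $\beta$ to satisfy this simultaneously for every non-leader forces the T-star weight pattern.

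For scenario (ii) under $L$ with structurally balanced topology, I invoke Lemma \ref{sc} to obtain the gauge matrix $C = \operatorname{diag}\{\sigma_1, \ldots, \sigma_n\}$ with $\sigma_i \in \{\pm 1\}$ and $CLC = \bar L$, where $\bar L$ is the standard Laplacian of the graph with nonnegative weights $\bar A = CAC$. Since $B = e_{v_1}$ and $C e_{v_1} = \pm e_{v_1}$, the controllable subspace of $(L, B)$ is simply $C$ times that of $(\bar L, \pm e_{v_1})$, and the two dimensions coincide. The analysis from scenario (i) then applies verbatim to $\bar L$, and the T-star expansion property of the underlying topology is invariant under the sign change $C$.

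The main obstacle is the necessity direction in scenario (i): knowing only that the controllable subspace has dimension two does not a priori pin down the invariant subspace as $\operatorname{span}\{e_1, \mathbf{1}_{V \setminus \{v_1\}}\}$, because one might worry that a clever tuning of the non-leader weights makes $(L^*)^2 e_1$ fall into $\operatorname{span}\{e_1, L^* e_1\}$ even when the $a_{i1}$ are not all equal. The delicate step is to show that the family of identities displayed above, taken across all non-leader indices, structurally forces $a_{i1}$ to be constant, recovering the T-star expansion hypothesis.
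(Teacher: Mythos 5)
Your sufficiency argument (T-star expansion $\Rightarrow$ dimension two) and your treatment of scenario (ii) via the gauge matrix $C$ from Lemma \ref{sc} are essentially the paper's own proof: the paper writes the same two-cell block decomposition of $L^*$, observes that $\mathbf{1}_{n-1}$ is an eigenvector of the follower block $L_f$ with all other eigenvectors orthogonal to it, and reduces the structurally balanced case to the nonnegative-weight case through $CLC=\bar L$. Your equitable-partition phrasing of the invariant subspace $\operatorname{span}\{e_1,\mathbf{1}_{V\setminus\{v_1\}}\}$ is a cleaner way of saying the same thing, and your observation that $C e_{v_1}=\pm e_{v_1}$ preserves the controllable subspace up to sign matches the paper's (terser) remark that the weights $1$ and $-1$ play the same role.

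The gap is in the necessity direction, and the step you defer is not merely delicate --- it cannot be completed as stated. Your reduction is sound: since $e_1$ and $L^*e_1$ are independent for a connected graph, the controllable subspace is two-dimensional iff $(L^*)^2e_1\in\operatorname{span}\{e_1,L^*e_1\}$. But that identity does not force the leader weights $a_{i1}$ to be equal. Take the triangle on $\{1,2,3\}$ with leader $1$ and weights $a_{21}=1$, $a_{31}=2$, $a_{23}=-2/3$: then $L^*e_1=(3,-1,-2)^T$ and $(L^*)^2e_1=\tfrac{14}{3}\,L^*e_1$, so the controllable subspace is two-dimensional although the graph is not a T-star expansion in the weighted sense. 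Consequently, either the proposition is read on unweighted graphs (as the section title suggests), in which case necessity follows in one line from Lemma \ref{cesarlemma} --- a follower not adjacent to the leader sits at distance at least two from it, so the dimension is at least three --- and your family of scalar identities is unnecessary; or the identical-weight condition is built into ``expanded by a T-star,'' in which case the triangle above contradicts the claim you are trying to prove. For comparison, the paper does not actually prove this direction either: it disposes of it in a single unsupported sentence. You should either restrict to the unweighted setting and invoke Lemma \ref{cesarlemma}, or explicitly flag that the weighted necessity claim fails.
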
}
\begin{proof}
Assume that node 1 is chosen as the single leader, and all followers are connected with node 1, with the corresponding weights being identical. Then, the Laplacian matrix can be decomposed into the form as
\begin{equation*}
  L^*=\left(\begin{array}{*{20}{c}}
  \alpha(n-1)&\alpha 1_{n-1}^T\\
  \alpha 1_{n-1}&L_f
  \end{array}\right),
\end{equation*}
where $\alpha$ represents the weight between leader and followers. $1_{n-1}$ is an eigenvector of $L_f$, such that all the other eigenvectors of $L_f$ are orthogonal to $1_{n-1}$. Then, under dynamics \eqref{nonab}, there are $n-2$ eigenvectors $x_f$ with $1_{n-1}^Tx_f=0$. Hence, there is only one node associated with $L_f$ being controllable. Consequently, there are only two controllable nodes in the whole system. Namely, the dimension of controllable subspace is two.  Once the connected graph is not expanded by a star graph anymore, the dimension of controllable subspace is  greater than two. For an antagonistic network with structurally balanced topology, its controllability is equivalent to the one with only positive weights. Under dynamics \eqref{ab}, the weights $1$ and $-1$ take the same role on controllability. Hence, if the connected graph is expanded by a T-star graph, the dimension of controllable subspace is two.
\end{proof}

In view of Proposition \ref{dimensions2}, once the connected graph is expanded by a T-star graph, whatever the connection between followers is,
the dimension of controllable subspace always takes the same value under \eqref{nonab}. The structure between nodes $2$ to $n$ dose not affect the controllability.
\begin{remark}
 Proposition \ref{dimensions2} also applies to directed graphs.
\end{remark}
\section{{Unifying Structural Controllability Under Different Protocols}}
\subsection{Structural Controllability of Weighted Graphs}
\begin{lemma}[Lin\cite{lin}]\label{linl}
The following statements are equivalent.
\begin{enumerate}
 { \item The pair $(A,B)$ is structurally controllable.
  \item The graph of $(A,B)$ contains no unaccessible node and no dilation.
  \item The graph of $(A,B)$ is spanned by a cactus.}
\end{enumerate}
\end{lemma}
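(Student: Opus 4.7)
The plan is to establish the equivalence by proving the cyclic implications $(1)\Rightarrow(2)\Rightarrow(3)\Rightarrow(1)$, since each direction admits a cleaner argument than the direct pairwise equivalences. The structural controllability rank has to be understood as the generic rank of the controllability matrix $[B,AB,\ldots,A^{n-1}B]$ viewed as a polynomial matrix in the free parameters (nonzero entries of $A$ and $B$); a sequence of realizations achieves full rank generically iff the determinant polynomial of any $n\times n$ minor is not identically zero.

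First I would prove $(1)\Rightarrow(2)$ by contraposition. If the graph of $(A,B)$ has an unaccessible node $v_i$, then the $i$th row of every power $A^kB$ is identically zero as a polynomial in the free parameters, so the controllability matrix has rank at most $n-1$ for every choice of weights, contradicting structural controllability. If there exists a dilation, i.e., a node set $S$ with $|T(S)|<|S|$ where $T(S)$ is the set of nodes pointing into $S$, then by a Hall-type counting argument the submatrix of $[A\ B]$ formed by the rows indexed by $S$ has at most $|T(S)|<|S|$ nonzero columns, forcing a rank drop in $[\lambda I-A\ B]$ at $\lambda=0$ for every weight assignment; by PBH the pair is not controllable for any weights.

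Next I would prove $(2)\Rightarrow(3)$, which is the main obstacle. The goal is to constructively extract a spanning cactus assuming accessibility and absence of dilation. The key tool is König-Hall: the no-dilation condition is equivalent to the bipartite graph between nodes and their in-neighborhoods (augmented with input nodes) admitting a perfect matching. I would use such a matching to decompose the digraph into a disjoint union of simple cycles and input-rooted paths (stems). Accessibility from inputs then lets me attach each cycle to some stem via an edge emanating from a node of the stem, turning each cycle into a bud; iterating this attachment produces a cactus whose node set equals $\mathcal{V}$. The hard work is verifying that the matching can always be redirected so that buds attach to existing stems or to previously attached buds, which requires an induction on the number of unattached cycles using the accessibility hypothesis.

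Finally I would prove $(3)\Rightarrow(1)$ by exhibiting an explicit weight assignment that realizes controllability on a cactus. On a single stem the controllability matrix is upper triangular with nonzero diagonal (by choosing generic weights along the stem), so it has full rank. Attaching a bud of length $\ell$ contributes $\ell$ additional linearly independent columns to the Krylov sequence, as can be seen by expanding $A^kB$ along the unique entry edge of the bud and choosing the bud weight so that the new columns are not swamped by the stem columns. The determinant of the resulting $n\times n$ submatrix of $[B,AB,\ldots,A^{n-1}B]$ is a nonzero polynomial in the weights, hence nonzero for a generic assignment, which yields structural controllability. The delicate point is the algebraic independence across buds, but this follows because each bud introduces a fresh weight variable that appears with a strictly higher degree in its own column than in any stem column.
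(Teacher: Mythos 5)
The paper does not prove this lemma at all: it is quoted verbatim as a known result from Lin's 1974 paper (reference \cite{lin}), so there is no in-paper argument to compare yours against. Judged on its own, your cyclic plan $(1)\Rightarrow(2)\Rightarrow(3)\Rightarrow(1)$ is the standard architecture for Lin's theorem, and the $(1)\Rightarrow(2)$ direction is essentially complete: an unaccessible node kills a row of the controllability matrix identically, and a dilation $|T(S)|<|S|$ confines the rows of $[A\;B]$ indexed by $S$ to at most $|T(S)|$ nonzero columns, so $\mathrm{rank}[A\;B]<n$ for every realization and PBH fails at $\lambda=0$. The Hall/K\"onig reformulation of the no-dilation condition as a matching saturating the state nodes, and the resulting decomposition into input-rooted stems and disjoint cycles, are also correct.

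The gaps are in the other two implications, and you have named but not closed them. For $(2)\Rightarrow(3)$, the inductive attachment of cycles as buds is the actual content of the theorem: you must argue that if $U$ is the union of the not-yet-attached cycles and no edge enters $U$ from the already-built cactus, then $U$ is unreachable from the inputs, contradicting accessibility; and you must check that the attaching edge satisfies the constraints in the definition of a bud (its origin is a node of the partial cactus, with the usual exclusion of the distinguished vertices), which is where the ``redirection of the matching'' you allude to has to be made precise. For $(3)\Rightarrow(1)$, the claim that each bud of length $\ell$ contributes $\ell$ independent Krylov columns is exactly what needs proof; the degree-counting heuristic (``a fresh variable of strictly higher degree in its own column'') is not an argument as stated. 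A cleaner way to finish is induction on the number of buds via PBH: choose the cycle weights of each new bud so that its spectrum is disjoint from that of the already-controllable part, then verify the PBH rank condition at each eigenvalue. As a plan your proposal is sound; as a proof it establishes only one of the three implications.
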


\begin{figure}[!h]
\centering
  \subfigure[stem]{
\includegraphics[width=0.85in]{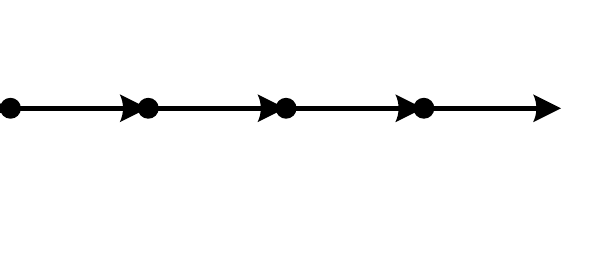}\label{stem}}
\subfigure[bud]{
\includegraphics[width=0.85in]{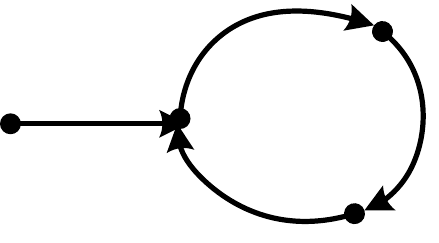}\label{bud}}
\subfigure[cactus]{
\includegraphics[width=0.9in]{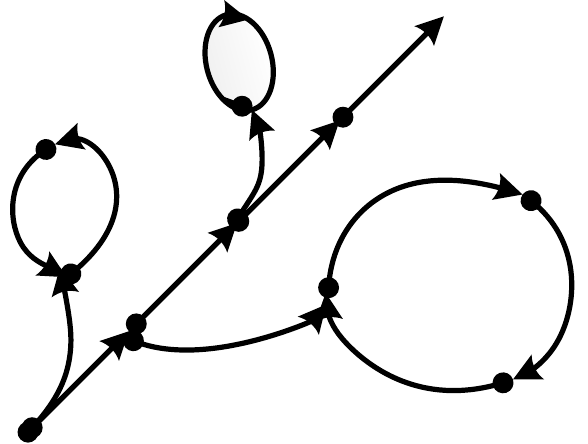}\label{cactus}
}\caption{The structures of stem, bud and cactus}\label{sbc}
\end{figure}

The structures of stem and bud are depicted as Fig. \ref{stem} and Fig. \ref{bud}, respectively. The union of stem and buds constitutes a structure of cactus  as shown in Fig. \ref{cactus}.
\begin{definition}[Lin\cite{lin}]
For a node set $S$, there is a neighbor set $T(S)$, such that there is an edge from each node in $T(S)$ to nodes in $S$. If $|S|>|T(S)|$, then we say that there is a dilation in the topology.
\end{definition}\begin{figure}[!h]
  \centering
  \subfigure[A simple multi-agent network;]{\includegraphics[width=0.9in]{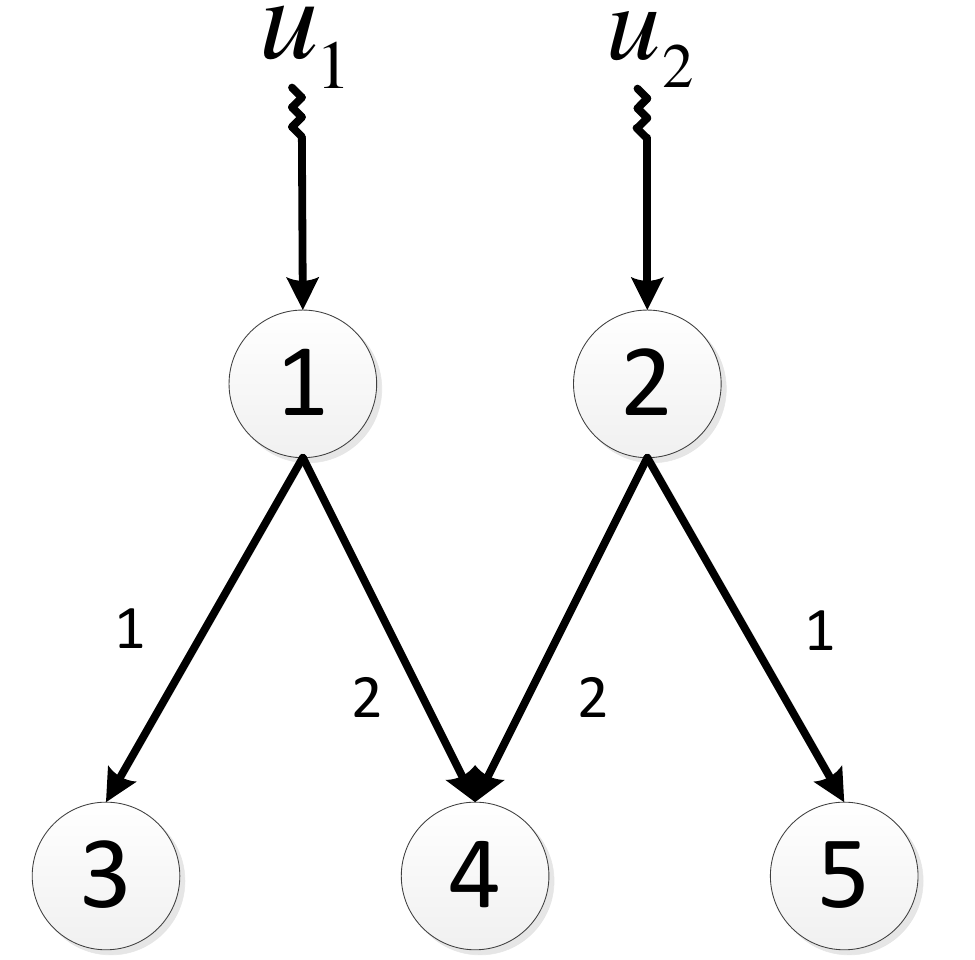}\label{nlinear}}\hspace{0.3in}
  \subfigure[A complex network with self-loop.]{\includegraphics[width=1.15in]{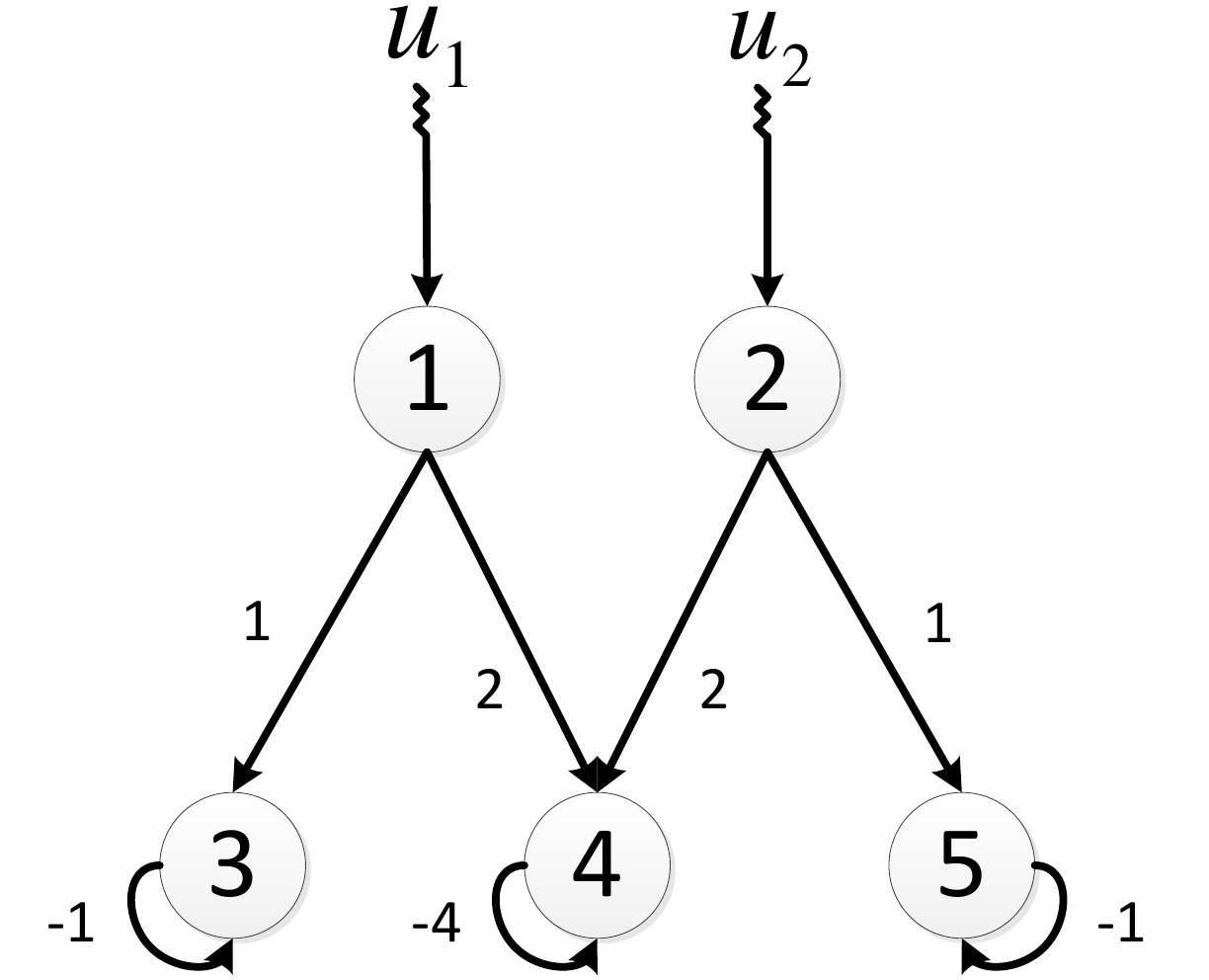}\label{complexnet}}\caption{Equivalent topologies to multi-agent system and complex network.}
\end{figure}

Actually, multi-agent systems have similar points with complex networks, the difference is the lack of degree matrix to complex networks. For a graph of multi-agent system as shown in Fig. \ref{nlinear}, there is an equivalent topology of complex network presented as Fig. \ref{complexnet}. {It can be found that the multi-agent system is a special case of complex networks with self-loop $l_{ii}=-A_i1_n$. For the special complex network of $l_{ii}=-A_i1_n$, there is not any dilation because of self-loops.} 
We say that one node is  accessible if there is a path from any input to the node, otherwise unaccessible.
\begin{figure}[!h]
  \centering
  \includegraphics[width=1.4in]{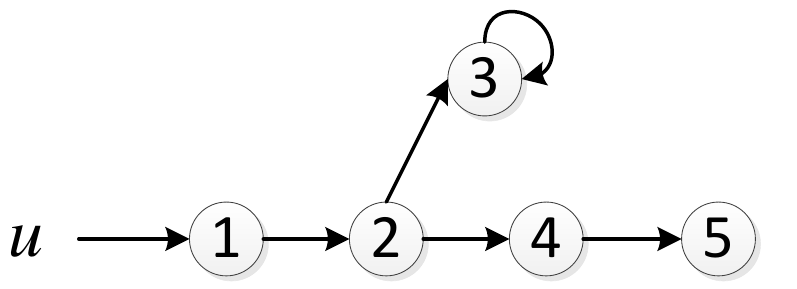}\\
  \caption{A cactus with branch}\label{dilation}
\end{figure}

From Fig. \ref{dilation}, if there is not a self-loop at node 3, there is a dilation. The complex network associated with dynamics \eqref{nonab} in Fig. \ref{dilation} is represented as
\begin{equation*}
  \dot x=\left(\begin{array}{*{23}{c}}
0&0&0&0&0\\
{{a_{21}}}&0&0&0&0\\
0&{{a_{32}}}&l_{33}&0&0\\
0&{{a_{42}}}&0&0&0\\
0&0&0&{{a_{54}}}&0
  \end{array} \right)x+\left(\begin{array}{*{23}{c}}
1\\
0\\
0\\
0\\
0
\end{array} \right)u.
\end{equation*}
The rank of $[\lambda I- A|B]$ is always five for both $\lambda=0$ and $\lambda\ne 0$.  In fact, there is only one free parameter in the third row of $A$. Let $l_{33}$ be the free parameter. The existence of $a_{32}$ only ensures the transmission of control information. If $l_{33}\ne 0$, then $a_{32}\ne0$. The value of $a_{32}$ does not affect the controllability.

For $(i,j)\in\mathcal{E}$, we call node $i$ as the father node of node $j$ and $j$ as the child node of node $i$. Once, node $i$ contains more than one child node, we say that there are branches at node $i$. If some nodes share one common father node, we call these nodes siblings, and the corresponding node set is denoted by $S_i$. Since there may be some nodes with more than one father node, we take $S_i\cap S_j=\emptyset$ with $i\ne j$. Namely, if there are some nodes contained by $S_i$, none of these nodes contained by another set $S_j$.
\begin{definition}
For a spanning tree with $m_i$ branches at each branch point, if each node of any $m_i-1$ branches contains a self-loop, we call this topology a pseudo spanning tree (PST).
\end{definition}

\begin{figure}[!h]
  \centering
  \includegraphics[width=1.4in]{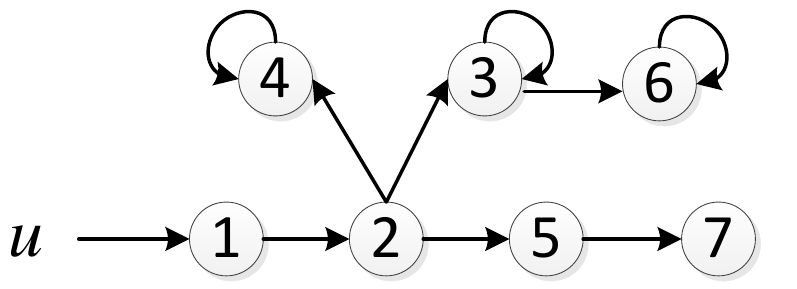}\\
  \caption{Branch lines with self-loop}\label{sib}
\end{figure}
For the system in Fig. \ref{sib}, if the weights of edges $e_{32}$ and $e_{42}$ take the same value, then $rank[\lambda I-A|B]=6$ for $\lambda=-a_{32}$, and $rank[\lambda I-A|B]=7$ for $\lambda\ne -a_{32}$. As for the branches at node 2, only if there are two branch lines with all nodes taking self-loop, and the weights of siblings take different values, $rank[\lambda I-A|B]$ can reach the maximum for $\lambda\ne0$.

\begin{lemma}\label{pst}
For a system with pseudo spanning tree, if the root node takes the leader role and the weights of siblings are different, then $rank[\lambda I-A|B]=n$ for $\lambda\ne0$.
\end{lemma}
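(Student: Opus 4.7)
The plan is to verify the rank condition with the PBH test. Because the root $r$ is the only leader, $B$ is the indicator vector of $r$, so it suffices to show that for every $\lambda\ne 0$ any $v$ satisfying $v^{\top}(\lambda I-A)=0$ together with $v_{r}=0$ must vanish. Ordering the nodes by breadth-first search from $r$ makes $A$ lower triangular; write $l_{jj}$ for its diagonal entry at node $j$ (the self-loop weight, with $l_{jj}=0$ when $j$ carries no self-loop). The column-$j$ equation of $v^{\top}(\lambda I-A)=0$ reads
\begin{equation*}
(\lambda-l_{jj})\,v_{j}=\sum_{c\text{ child of }j}a_{cj}\,v_{c},
\end{equation*}
and I process it from the leaves up toward the root.

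If $\lambda$ avoids every value $l_{jj}$, each leaf equation gives $v_{\ell}=0$ (for a stem leaf $l_{\ell\ell}=0$, which is exactly where $\lambda\ne 0$ is needed), and the recursion propagates $v=0$ upward, so $v=0$ directly. The nontrivial case is when $\lambda=l_{jj}$ for a nonempty set $S$ of nodes, necessarily lying inside self-loop subtrees. At each $j\in S$ the column equation no longer fixes $v_{j}$ but instead imposes the sibling constraint $\sum_{c}a_{cj}v_{c}=0$; outside $S$ the recursion still writes $v_{j}$ as a specific linear combination of its children's values.

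The pseudo-spanning-tree structure combined with distinct sibling weights is what collapses this singular case. At every branching vertex $p$, PST places a self-loop on each node of $m_{p}-1$ of its branches, and the distinctness hypothesis makes the values $l_{cc}$ over the siblings of $p$ pairwise different, so at most one sibling can lie in $S$. The remaining siblings satisfy $\lambda\ne l_{cc}$ and therefore have $v_{c}$ completely determined by their own subtrees via the recursion. Applying this observation recursively down each self-loop subtree shows that the ``singular branches'' are isolated: at every branch point at most one child continues as a possibly-singular branch, while the other siblings contribute only through determined linear combinations of free parameters originating strictly below them. A chain of $S$-nodes along such a singular branch collapses as well, because each interior $j\in S$ forces its constraint $\sum_{c}a_{cj}v_{c}=0$ to read $a_{c^{\ast}j}v_{c^{\ast}}=0$ for the unique singular child $c^{\ast}$, killing the deeper free parameter.

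The argument closes at the root. The constraint $v_{r}=0$ combined with column $r$ yields $\sum_{c\text{ child of }r}a_{cr}v_{c}=0$; by the previous step each $v_{c}$ reduces to a nonzero scalar multiple of the free parameter of a distinct sibling subtree, and since these parameters are independent the only solution is that every one of them vanishes. Hence $v=0$, and $\mathrm{rank}[\lambda I-A\,|\,B]=n$ for every $\lambda\ne 0$. I expect the main obstacle to be the bookkeeping of the third paragraph: tracking how the distinct-sibling hypothesis localises the freedom to exactly one free parameter per sibling subtree at the root level, so that the root constraint then eliminates everything cleanly. This is precisely the mechanism that breaks in the warm-up Fig.~\ref{sib} when $a_{32}=a_{42}$, where the two siblings become resonant at $\lambda=-a_{32}$ and leave an undetermined degree of freedom behind.
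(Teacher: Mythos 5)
Your overall strategy---PBH via a leaf-to-root recursion on the left null space of the triangularised tree matrix---is the right framework and is considerably more explicit than the paper's own argument, which merely notes that the spectrum of $A$ is its diagonal and that equal sibling weights would create a repeated nonzero eigenvalue with $\mathrm{rank}[\lambda I-A]<n-1$. However, your third and fourth paragraphs contain a genuine gap. The distinct-siblings hypothesis only guarantees that at most one node \emph{per sibling group} lies in $S=\{j:l_{jj}=\lambda\}$; it does not prevent $|S|\geq 2$ when the resonant nodes occupy non-sibling positions (an ancestor and a descendant, or two nodes in different subtrees at different depths). In that situation two of your claims fail. First, at an interior $j\in S$ the constraint $\sum_c a_{cj}v_c=0$ does not reduce to $a_{c^{\ast}j}v_{c^{\ast}}=0$: a non-singular sibling $c\neq c^{\ast}$ carries $v_c\neq0$ whenever its own subtree contains a node of $S$, so the constraint merely expresses $v_{c^{\ast}}$ in terms of deeper free parameters instead of killing one. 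Second, the closing step is invalid: the root contributes the single linear equation $\sum_c a_{cr}v_c=0$, and one equation cannot force several independent parameters to vanish simultaneously.

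The gap is not merely presentational. Consider the PST $1\to2$, $1\to3$, $3\to4$ with root $1$ as leader and self-loops $l_{22}=\alpha$, $l_{33}=\beta$, $l_{44}=\alpha$ with $\alpha\neq\beta$; the only sibling pair is $\{2,3\}$ and its weights are distinct, yet at $\lambda=\alpha$ the vector $v=\bigl(0,\;-\tfrac{a_{31}a_{43}}{a_{21}(\alpha-\beta)},\;\tfrac{a_{43}}{\alpha-\beta},\;1\bigr)$ satisfies $v^{\top}(\lambda I-A)=0$ and $v^{\top}B=0$, so $\mathrm{rank}[\lambda I-A\,|\,B]=3<4$. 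Your recursion does close whenever $|S|\leq1$ for every $\lambda\neq0$, i.e.\ when \emph{all} self-loop weights are pairwise distinct---which is what the paper implicitly relies on when it invokes this lemma inside the structural-controllability argument, where the weights are chosen generically. To complete the proof you must either strengthen the hypothesis to that effect or supply an argument excluding non-sibling resonances; as written, the reduction to one free parameter per sibling subtree, eliminated by the single root constraint, does not hold.
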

\begin{proof}
The eigenvalues of $A$ associated with PST depend on the diagonal entries, that is, $\lambda_i=l_{ii}$ or $0$. Assume that there are $r$ branches, and each branch contains $|S_i|$ siblings. Then the weight for each one in the siblings takes different value. If not, there will arise one repeated eigenvalue $\lambda\ne 0$, such that $rank[\lambda I-A]<n-1$, and consequently $rank[\lambda I-A|B]\leq n-1$.
\end{proof}
\begin{remark}
The above arguments and Lemma \ref{pst} also apply to dynamics \eqref{ab}. The root node in directed tree graphs is a node that can reach any other node.
\end{remark}
\begin{lemma}[Aguilar\cite{cesar}]\label{cesarlemma}
For a connected graph $\mathcal{G}$, the dimension of controllable subspace $dim\langle L;B\rangle\geq r+1$, where $B=e_i$, $r=\max d(i,j)$, and $d(i,j)$ is the distance from node $i$ to node $j$; $i,j\in\mathcal{V}.$
\end{lemma}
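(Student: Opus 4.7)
The plan is to exhibit $r+1$ explicit linearly independent vectors inside the Krylov subspace $\langle L;B\rangle=\mathrm{span}\{B,LB,L^{2}B,\ldots,L^{n-1}B\}$, which coincides with the controllable subspace. First I would introduce the distance partition with respect to the leader $i$: define $V_k=\{j\in\mathcal{V}:d(i,j)=k\}$ for $k=0,1,\ldots,r$. Connectedness of $\mathcal{G}$ together with $r=\max_{j}d(i,j)$ guarantees that every $V_k$ is nonempty for $k\le r$, so the partition consists of exactly $r+1$ blocks.

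Next, I would prove by induction on $k$ that the vector $L^{k}B$ is supported in $V_0\cup V_1\cup\cdots\cup V_k$. The base case is immediate because $B=e_i$ is supported on $V_0=\{i\}$. For the inductive step, observe that for any vector $y$, $(Ly)_j$ depends only on $y_j$ and on $y_{j'}$ for neighbors $j'$ of $j$; hence a node at distance exactly $k+1$ from $i$ can only pick up a nonzero value from a neighbor at distance $\le k$, which by the inductive hypothesis is where $L^{k-1}B$ may already be supported. Thus left multiplication by $L$ can spread the support by at most one additional distance layer.

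The main step, and the one I expect to be the main obstacle, is to show that $L^{k}B$ has a nonzero component on at least one node of $V_k$ itself, so that the family $\{B,LB,\ldots,L^{r}B\}$ has strictly deepening support. In the unweighted setting of Lemma~\ref{cesarlemma} this follows because every node $j\in V_k$ has at least one neighbor in $V_{k-1}$, and the contribution at $j$ is a sum of positive weights times the (inductively nonzero, same-signed) values of $L^{k-1}B$ on those $V_{k-1}$ neighbors, which cannot cancel. The subtlety — and what makes this step the crux — is that under signed or indefinite weights cancellations between different $V_{k-1}$ neighbors could in principle annihilate the $V_k$ component; in the Aguilar setting only the combinatorial distance structure is needed, so one can avoid this obstruction by working with the unweighted (or positive-weight) Laplacian, or by noting that the set of weight assignments producing full cancellation is a proper algebraic subvariety.

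Finally I would conclude as follows: given the staircase property "$L^{k}B$ has support touching $V_k$ but $L^{j}B$ for $j<k$ does not," any nontrivial linear combination $\sum_{k=0}^{r}c_k L^{k}B = 0$, restricted to the block $V_{k^*}$ corresponding to the largest $k^*$ with $c_{k^*}\ne 0$, would force $c_{k^*}=0$, a contradiction. Hence $B,LB,\ldots,L^{r}B$ are linearly independent vectors lying in $\langle L;B\rangle$, yielding $\dim\langle L;B\rangle\ge r+1$, as required.
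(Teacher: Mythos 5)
The paper does not prove this lemma at all --- it is imported verbatim from Aguilar and Gharesifard \cite{cesar} as a known result --- so there is no in-paper argument to compare against; your proposal is essentially the standard proof from that literature (distance layers $V_k$, the Krylov vectors $L^kB$ supported in $V_0\cup\cdots\cup V_k$, and a staircase/triangularity argument), and it is correct in the setting the lemma is actually used, namely positive edge weights. One point to tighten: your inductive claim that $L^kB$ is nonzero on ``at least one node of $V_k$'' is too weak to propagate, because a node $j\in V_{k+1}$ need not be adjacent to the particular node of $V_k$ you certified. The induction hypothesis should be that $(L^kB)_j\ne 0$ with sign $(-1)^k$ for \emph{every} $j\in V_k$; this is what the same-sign cancellation argument (equivalently, the walk count $(L^kB)_j=(-1)^k\sum_{P}\prod_{(u,v)\in P}a_{uv}$ over shortest paths $P$ from $i$ to $j$) actually delivers, since the diagonal term and the neighbors in $V_k\cup V_{k+1}$ contribute nothing. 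With that strengthening the staircase argument goes through and gives $r+1$ independent vectors. Your caveat about signed weights is well taken and worth keeping in mind: the paper invokes this lemma inside the proof of Theorem \ref{t2} for weighted graphs, where the sign-based non-cancellation can fail for particular weight choices; there the generic (algebraic-subvariety) version you mention is the one that is actually needed.
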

{\begin{lemma}\label{iff}
Let $\mathcal{C}=[\begin{array}{*{6}{c}}B&-LB&\cdots&(-L)^{n-1}B\end{array}]$, $P=[\begin{array}{*{5}{c}}e_{i_1}^T&e_{i_2}^T&\cdots&e_{i_p}^T\end{array}]^T\in\mathbb{R}^{p\times n}$, where $e_{i_j}$ is a basic standard vector, $i_1,i_2,\ldots,i_p\in\{1,\cdots,n\}.$ Suppose that all eigenvalues of $L$ are simple, then $rank(P[\lambda I+L|B])=p$ for $\forall \lambda \in\mathbb{C}$ if and only if $rank(P\mathcal{C})=p$.
\end{lemma}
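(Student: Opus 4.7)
The plan is to prove the two implications separately, using a Cayley--Hamilton argument for one direction and a spectral/Vandermonde argument for the other.

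For the implication $rank(P\mathcal{C})=p \Rightarrow rank(P[\lambda I+L|B])=p$ for all $\lambda$, I would argue by contrapositive. If some $\lambda_0\in\mathbb{C}$ and nonzero $c\in\mathbb{C}^p$ satisfy $c^T P[\lambda_0 I+L|B]=0$, set $y=P^T c$: this vector is nonzero, supported on $S:=\{i_1,\ldots,i_p\}$, and satisfies $y^T L=-\lambda_0 y^T$ together with $y^T B=0$. A short induction then yields $y^T(-L)^k B=(-\lambda_0)^k y^T B=0$ for every $k\geq 0$, hence $c^T P\mathcal{C}=0$, contradicting the rank hypothesis. Note that simplicity of the spectrum is not needed in this direction.

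For the converse, assume $rank(P[\lambda I+L|B])=p$ for all $\lambda$ and suppose toward contradiction that $rank(P\mathcal{C})<p$, so that some nonzero $y$ supported on $S$ satisfies $y^T(-L)^k B=0$ for $k=0,\ldots,n-1$. Using simplicity to diagonalize $L=V\Lambda V^{-1}$ and expanding $y=\sum_i c_i v_i$ in the left-eigenvector basis $\{v_i^T\}$ formed by the rows of $V^{-1}$, the identities $y^T L^k B=\sum_i c_i \lambda_i^k(v_i^T B)=0$ for $k=0,\ldots,n-1$, combined with Vandermonde invertibility in the distinct $\lambda_i$, force $c_i(v_i^T B)=0$ for every $i$. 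Every index in $I:=\{i:c_i\ne 0\}$ therefore yields a left eigenvector $v_i$ with $v_i^T B=0$, witnessing a PBH failure of the full system at $\lambda=-\lambda_i$.

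The final and most delicate step is to upgrade this failure into one visible through the row-selector $P$, i.e., to produce some $j$ and nonzero $c^*$ with $c^{*T} P[-\lambda_j I+L|B]=0$. My plan is to use the Lagrange spectral projectors $E_j=\prod_{\ell\ne j}(L-\lambda_\ell I)/(\lambda_j-\lambda_\ell)$, noting that $E_j^T y$ is proportional to $c_j v_j$ and that $\sum_j E_j=I$ reassembles $y$; the task then reduces to showing that at least one spectral component $c_j v_j$ of $y$ is itself supported on $S$, so that $v_j$ serves as the required witness. This is the main obstacle, because several $v_i$'s with out-of-$S$ coordinates could in principle cancel each other in the sum reconstructing $y$. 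I would attempt to close the gap by choosing $|I|$ minimal and exploiting the coordinate-vector nature of $P$, together with an induction on $|I|$ that leverages the simplicity of the spectrum.
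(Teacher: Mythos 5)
Your first direction is correct and coincides with the paper's own argument: from $c^TP[\lambda_0 I+L|B]=0$ you get, for $y=P^Tc$, that $y^T(-L)=\lambda_0y^T$ and $y^TB=0$, hence $y^T(-L)^kB=\lambda_0^k\,y^TB=0$ for all $k$ (the factor is $\lambda_0^k$ rather than $(-\lambda_0)^k$, which is immaterial here), so $c^TP\mathcal{C}=0$; and you are right that simplicity of the spectrum is not needed for this half.

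The genuine gap is in the converse, and you have located it exactly but cannot close it --- nor can anyone, because that implication is false under the stated hypotheses. After the Vandermonde step you only know that each $v_i$ with $c_i\ne 0$ satisfies $v_i^TB=0$; to contradict $rank(P[\lambda I+L|B])=p$ you would need a single left eigenvector that is itself supported on $S$, and the cancellation among out-of-$S$ coordinates that you flag as the ``main obstacle'' really does occur. Concretely, let $W$ have rows $w_1=(1,0,1,0)$, $w_2=(0,1,1,0)$, $w_3=(0,0,1,1)$, $w_4=(0,0,0,1)$ (unit upper triangular, hence invertible), put $L=W^{-1}\,diag\{1,2,3,4\}\,W$ so that the $w_i$ are left eigenvectors and all eigenvalues are simple, and take $B=e_4$ and $P$ selecting rows $1$ and $2$ (so $S=\{1,2\}$). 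Then $y=e_1-e_2=w_1-w_2$ is supported on $S$ and $y^T(-L)^kB=(-1)^k(1^k\,w_1^Te_4-2^k\,w_2^Te_4)=0$ for every $k$, so $rank(P\mathcal{C})=1<2$; yet no $w_i$ has its last two coordinates both zero, so no nonzero vector supported on $S$ is a left eigenvector of $L$, and therefore $rank(P[\lambda I+L|B])=2$ for every $\lambda\in\mathbb{C}$. Here $|I|=2$ is already minimal, so the proposed induction on $|I|$ with spectral projectors has nowhere to go. For what it is worth, the paper's proof does not resolve this step either: it simply asserts the identity $\cup_{i=1}^n\ker([\lambda_i I+L|B]^TP^T)=\ker(\mathcal{C}^TP^T)$, whose nontrivial inclusion is exactly what you were unable to prove; the counterexample shows that the ``only if'' direction of the lemma needs additional structural hypotheses beyond simplicity of the eigenvalues, so the right move is to challenge the statement rather than to keep searching for the missing trick.
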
\begin{proof}
Let $P=[\begin{array}{*{5}{c}}e_{i_1}^T&e_{i_2}^T&\cdots&e_{i_p}^T\end{array}]^T\in\mathbb{R}^{p\times n}$, where $e_{i_j}$ is a basic standard vector, $i_1,i_2,\ldots,i_p\in\{1,\cdots,n\}.$ Assume that $rank(P[\lambda I+L|B])<p$. Then, there exists a nonzero vector $q\in\mathbb{R}^{p}$ such that $q^TP[\lambda I+L|B]=0$, which means that $q^TP(-L)=\lambda q^TP$ and $q^TPB=0$. Hence, $q^TPB=q^TP(-L)B=q^TP(-L)^2B=\cdots=q^TP(-L)^{n-1}B=0$, $q^T[\begin{array}{*{6}{c}}B&-LB&\cdots&(-L)^{n-1}B\end{array}]=0.$ Once all eigenvalues of $L$ are simple, $\cup_{i=1}^n\ker([\lambda_i I+L|B]^TP^T)=\ker(C^TP^T)$. Consequently, $rank(P[\lambda I+L|B])=p,$ $\forall \lambda \in\mathbb{C}\Longleftrightarrow rank(P\mathcal{C})=p$, where $\mathcal{C}=[\begin{array}{*{6}{c}}\!\!B\!&\!-LB\!&\!\cdots\!&\!(-L)^{n-1}B\!\!\end{array}]$.
\end{proof}
\begin{theorem}\label{t2}
The systems \eqref{ab} and \eqref{nonab} under fixed topology are structurally controllable if and only if the topology $\mathcal{G}$ is connected and there are no unaccessible nodes.
\end{theorem}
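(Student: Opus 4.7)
The plan is to split the proof into necessity and sufficiency, with necessity being a direct contrapositive argument and sufficiency being carried out by exhibiting one weight assignment that passes the PBH rank test at every eigenvalue for both $L$ and $L^*$ simultaneously.

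For necessity I would argue by contrapositive. If some node $v$ is unaccessible, then for every choice of weights the state matrix (and $B$) admits a permutation into block-triangular form with the block on the unaccessible component isolated from the inputs; the PBH test then fails at any eigenvalue of that block, so the system is not controllable for any weights, contradicting structural controllability. If instead the subgraph induced by the accessible nodes is disconnected, pick a connected component that contains no leader: each of its nodes has no directed path from any input and is therefore unaccessible, again a contradiction. Hence structural controllability forces both accessibility and connectedness of the accessible subgraph.

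For sufficiency the idea is to produce one witness weight assignment. Using connectedness and accessibility I extract a directed spanning tree $\mathcal{T}$ rooted at the leader set, and then turn $\mathcal{T}$ into a pseudo spanning tree in the sense of the earlier definition by keeping, at every branching node with $m_i$ children, $m_i-1$ of the outgoing branches carrying nonzero Laplacian self-loops (which is automatic for Laplacian dynamics as soon as their incident weights are nonzero, a point already observed in the introduction for multi-agent systems). I would then choose the weights of siblings to be pairwise distinct, which is possible because weights are drawn from a continuum. Lemma \ref{pst} then yields $rank[\lambda I - A \mid B] = n$ for every $\lambda \neq 0$, and, by the remark that follows Lemma \ref{pst}, the same conclusion holds for the protocol \eqref{ab} as well.

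It remains to handle $\lambda = 0$, which is the delicate part. For $L^*$ I would invoke Lemma \ref{zeroc}: generic weights on the spanning tree (and on the extra edges that close $\mathcal{G}$) avoid every zero circle, identical-node configuration and opposite pair, so zero is a simple eigenvalue of $L^*$, and because the root of $\mathcal{T}$ lies in the leader set the PBH condition at $\lambda=0$ reduces to a nonvanishing inner product that can be arranged. For $L$ I would specialize the witness to all-positive weights and apply Lemma \ref{sc}: the topology is then either structurally unbalanced, so $0$ is not even an eigenvalue of $L$, or balanced with $0$ simple, and in either case the PBH test at $\lambda=0$ is passed by the same leader-root argument. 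The main obstacle I anticipate is showing cleanly that one single generic weight assignment simultaneously enforces the PST branching condition demanded by Lemma \ref{pst}, excludes all the exceptional substructures of Lemma \ref{zeroc}, and keeps the leader component of the zero eigenvector nonzero; since each requirement is an open algebraic condition on the weight vector, their intersection is still open and nonempty, but writing this out uniformly for both protocols \eqref{ab} and \eqref{nonab} without falling into a protocol-by-protocol case split is the part that requires care.
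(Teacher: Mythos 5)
Your necessity argument and your directed-case sufficiency argument track the paper's route (span by a pseudo spanning tree, make sibling weights distinct, invoke Lemma \ref{pst}), and your treatment of $\lambda=0$ via Lemma \ref{zeroc} for $L^*$ and Lemma \ref{sc} for $L$ is actually more explicit than the paper's, which simply asserts that Lemma \ref{pst} covers $\lambda=0$ as well. Your worry about finding a single weight assignment valid for both protocols is unnecessary: structural controllability only asks for one witness per protocol, so the two systems may be certified by different weight choices.

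The genuine gap is the undirected case. Your entire sufficiency argument routes through Lemma \ref{pst}, whose proof rests on the state matrix of a directed tree being (permutation-)triangular, so that its eigenvalues are exactly the diagonal entries $l_{ii}$; for an undirected graph the constraint $a_{ij}=a_{ji}$ makes $L$ and $L^*$ symmetric, the spectrum is no longer read off the diagonal, and "orient a spanning tree away from the root'' is not available because you cannot set the back-edge weights to zero or choose them independently. The paper therefore uses a completely different mechanism for undirected graphs: Lemma \ref{cesarlemma} gives a controllable path of length $r+1$ from the leader, Lemma \ref{iff} converts the Kalman condition into a PBH row-independence condition (valid when the eigenvalues are simple, which holds generically), and controllability is then propagated past each branch point by choosing the weights to the siblings pairwise distinct so that the corresponding rows of $[\lambda I+L\mid e_1]$ become independent, iterating until all nodes are covered. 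Without some substitute for this step your proof establishes the theorem only for directed topologies, whereas the statement (and both dynamics \eqref{ab} and \eqref{nonab} as defined in Section 2) is meant to cover undirected graphs as well.
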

\begin{proof}
The directed graphs are considered firstly. A directed connected graph $\mathcal{G}$ without unaccessible nodes can be spanned by a PST which is an union of cacti. For a general complex network with self-loops, every weight can be set arbitrarily, and accordingly the network is always structurally controllable. Specially, for the special complex networks associated with multi-agent systems, there is one constrain of $l_{ii}=-A_i1_n$.
 As a consequence, if there are $m$ nonzero entries in $A_i$, there are only $m-1$ free parameters. The similar  argument was also presented in \cite{para}. At a branch, we take $a_{ii}$ as a free parameter, while $a_{i(i-1)}$ is only used to transmit the information. By Lemma \ref{pst}, even for $m-1$ free parameters, $[\lambda I-A|B]$ is still full row rank for both $\lambda=0$ and $\lambda\ne0$. Hence, by Lemma \ref{linl}, the system is structurally controllable if and only if $\mathcal{G}$ is spanned by PST.

  Secondly, the case of undirected graphs is considered. Assume that $B=e_1$ and $r=d(1,m)$, where $d(1,m)$ covers nodes $1$ to $m$. Namely, nodes $1$ to $m$ are controllable from Lemma \ref{cesarlemma}. If there is a branch at node $1$ with sibling nodes $m$ and $m+1$, then the Laplacian matrix can be expressed as
\begin{equation*}
 L=\left( {\begin{array}{*{20}{c}}
*& \cdots &*&*&*& \cdots &*\\
 \vdots & \ddots & \vdots & \vdots & \vdots & \ddots & \vdots \\
{{l_{m1}}}& \cdots &{{l_{mm}}}&?&?& \cdots &?\\
{{l_{(m + 1)1}}}& \cdots &?&{{l_{(m + 1)(m + 1)}}}&?& \cdots &?\\
 \vdots & \ddots & \vdots & \vdots & \vdots & \ddots & \vdots \\
*& \cdots &*&*&*& \cdots &*
\end{array}} \right),
\end{equation*}
where the question mark can be a zero or nonzero entry. Once $l_{m1}\ne l_{(m+1)1}$, at least one choice of weights $?$ occurs $l_{mm}\ne l_{(m+1)(m+1)}$. Hence, the $m$th and $(m+1)$th rows of $[\lambda I+L|e_1]$ can be linearly independent, which means that node $m+1$ also can be controllable from Lemma \ref{iff}. Since node $m+1$ is controllable, by the same manner, there are another $d(m+1,j)$ controllable nodes, $j\notin\{1,\ldots,m\}$. If there is a branch between nodes $m+1$ to $j$, and the weights between father node and sibling nodes take different values from each other, then all the sibling nodes can be controllable. Iteratively, there is a choice of weights such that all nodes are controllable. If a system under $B=e_i$ is structurally controllable, the corresponding undirected graph is connected.
\end{proof}
\begin{remark}\label{point}
The need to rigorously prove that in some cases the results of complex networks are applicable to multi-agent systems has been neglected by some previous works. Lemma 7 also applies to $L^*$.
\end{remark}
\subsection{Strong Structural Controllability of Undirected Graphs}
The strong structural controllability can be regarded as a reinforcing case of structural controllability, which requires the system to be controllable for any choice of weights.
If the leaf node of a path takes leader's role, the system is controllable,  where the leaf node is a node whose degree is one. Besides, if two adjacent nodes of path graphs and circle graphs are chosen as leaders, the systems are also controllable. For the above three cases of controllable systems, whatever the weights are, the systems are always controllable. Let $\mathcal{Q}(\mathcal{G})=\{A\in\mathbb{R}^{n\times n}: A=A^T, \text{and for},i\ne j,a_{ij}\ne0\Leftrightarrow(i,j)\in\mathcal{E}\}$, where the entries of $A\in\mathcal{Q}(\mathcal{G})$ can take arbitrary real values.

\begin{definition}
A network with a structure described by the graph $\mathcal{G}$ is strongly structurally controllable, if for $\forall A\in\mathcal{Q}(\mathcal{G})$, the system $(L;B)$ is controllable.
\end{definition}

The path has advantages in the discussion of controllability. Once the leaf node is chosen as a leader, all nodes  are controllable whatever the weights are. Under the case that the leaf node takes leader's role and the system is uncontrollable, it arises a situation that there is at least one eigenvector $x=0$. 
Thus, the system is uncontrollable in terms of the control strategy of $n$ inputs.
By contradiction, if the leaf node is chosen as the leader, the system is controllable.
\begin{definition}
 For a path initiated from one leader node to one leaf node, we call it a control path.
\end{definition}
\begin{theorem}\label{pathcontrollable}
Under dynamics \eqref{ab} and \eqref{nonab}, the dimension of the controllable subspace of strong structural controllability is not less than the number of the nodes in the longest distance control path.
\end{theorem}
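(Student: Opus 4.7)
The plan is to reduce the theorem to a single-leader instance and then invoke Lemma \ref{cesarlemma}. Since enlarging the leader set cannot shrink the controllable subspace, it suffices to fix one leader $v_1$ at the start of a longest-distance control path $v_1, v_2, \ldots, v_p$ ending at a leaf $v_p$, and to prove $\dim \langle L; e_{v_1}\rangle \geq p$ (and analogously for $L^*$) for every weighting in $\mathcal{Q}(\mathcal{G})$.

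The first step is to observe that, by the definition of the longest-distance control path, the graph distance satisfies $d(v_1, v_p) = p - 1$. Applying Lemma \ref{cesarlemma} with $B = e_{v_1}$ gives $\dim \langle L; e_{v_1}\rangle \geq r + 1$ where $r = \max_j d(v_1, j) \geq d(v_1, v_p) = p - 1$, so $\dim \geq p$. Since the distance partition underlying Aguilar's bound is purely combinatorial and depends only on which edges are present, the bound is uniform over every weighting in $\mathcal{Q}(\mathcal{G})$, which is the strong-structural content we want. The same reasoning transfers to dynamics \eqref{nonab}: the graph and its distance shells from $v_1$ are identical, and the Krylov chain $e_{v_1}, -L^* e_{v_1}, \ldots, (-L^*)^{p-1} e_{v_1}$ exhibits the same shell-by-shell support growth, furnishing $p$ linearly independent vectors.

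The main obstacle I anticipate is verifying the shell-by-shell propagation in the signed/antagonistic setting, since Aguilar's original argument is most naturally stated for unsigned Laplacians. One must rule out the possibility that several length-$k$ walks reaching the newly touched distance shell cancel in $(-L)^k B$. I plan to handle this by exploiting the leaf property of $v_p$: because $v_p$ has $v_{p-1}$ as its unique neighbor, every walk into $v_p$ must traverse the edge $(v_{p-1}, v_p)$ on its last step, yielding a two-term recurrence that expresses $[(-L)^k B]_{v_p}$ as an edge-weighted difference of $[(-L)^{k-1} B]_{v_p}$ and $[(-L)^{k-1} B]_{v_{p-1}}$. Propagating this recurrence inductively along the geodesic $v_1\to v_2\to\cdots\to v_p$, together with the earlier-shell independence supplied by Lemma \ref{cesarlemma}, yields a nonvanishing component of $(-L)^{p-1} B$ at $v_p$ and hence $p$ linearly independent Krylov vectors.
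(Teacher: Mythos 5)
Your route is genuinely different from the paper's, but it has two gaps. First, you identify the number of nodes in the longest control path with $\max_j d(v_1,j)+1$ by asserting $d(v_1,v_p)=p-1$. A control path is defined only as a path from a leader to a leaf; it need not be a geodesic, so the longest control path can contain strictly more nodes than any distance shell count, and Lemma \ref{cesarlemma} then yields a strictly weaker bound than the theorem claims. At best your argument proves a distance-type statement closer to Corollary \ref{shortest}, not Theorem \ref{pathcontrollable}. Second, the claim that Aguilar's bound is ``uniform over every weighting in $\mathcal{Q}(\mathcal{G})$ because the distance partition is purely combinatorial'' is not correct: the proof of that bound needs the entry of $(-L)^k e_{v_1}$ at a node at distance exactly $k$ to be a sum of geodesic weight-products that cannot cancel, which is guaranteed for positive weights but fails for the signed weights allowed here (two geodesics of the same length with opposite sign products give a zero entry). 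You recognize this, but your fix only controls the last step into the leaf $v_p$; cancellations occur at intermediate nodes reached by several geodesics, and invoking ``earlier-shell independence supplied by Lemma \ref{cesarlemma}'' there is circular, since the validity of that lemma for signed weights is precisely what is in question.

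The paper avoids both problems by a different mechanism: it partitions $L$ into the block $L_P$ on the path nodes and the rest, assigns every off-path node its own fictitious input so that $-L_{P\neg P}x_{\neg P}$ becomes an additional input to the path subsystem, and then uses that a leaf-led path (tridiagonal $L_P$ with nonzero sub/super-diagonal entries and arbitrary diagonal) is controllable for every choice of weights. That argument is insensitive to signs and does not require the path to be a geodesic, which is exactly what lets the theorem count nodes of the path rather than distances. If you want to salvage your approach, you would need either to restrict to the geodesic version of the statement and to positive weights, or to replace the shell argument by the triangular-structure argument along the path that the paper uses.
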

\begin{proof}
Assume that the longest distance control path is $\mathcal{P}$ with $m$ nodes. Partition the system matrix and input matrix,
\begin{equation*}
  L=\left(\begin{array}{*{20}{c}}
  L_P&L_{P\neg P}\\
  L_{P\neg P}^T&L_{\neg P}
  \end{array}\right),\hspace{1.5em}
  B=\left(\begin{array}{*{20}{c}}
  e_1&B_1\\
  0&B_2
  \end{array}\right),
\end{equation*}
where $L_P\in\mathbb{R}^{m\times m}$, $B_1=\mathbf{0}_{m\times(n-m)}$, $B_2=I_{n-m}$, $e_1$ is a standard basis vector. Since every node in $\mathcal{V}_{\neg P}$ is injected by individual inputs, the nodes except for ones in $\mathcal{P}$ are controllable. For simplification, we just identify the controllability of the subsystem
\begin{equation*}
  \dot x_{P}=-L_Px_{P}+\left( {\begin{array}{*{20}{c}}
{{e_1}}&{ - L_{P\neg P}}
\end{array}} \right)\left( {\begin{array}{*{20}{c}}
{{u_1}}\\
{{x_{\neg P}}}
\end{array}} \right).
\end{equation*}
The submatrix $L_P$ is associated with path $\mathcal{P}$ and the leader node in $\mathcal{P}$ can be regarded as a leaf node. Then, the subsystem
\begin{equation*}
    \dot x_{P}=-L_Px_{P}+e_1u_1
\end{equation*}
is controllable.  Since the subsystem associated with $x_P$ is controllable, the term $- L_{P\neg P}x_{\neg P}$ does not affect the controllability of the subsystem. Therefore, for the system
\begin{equation*}
  \dot x=-Lx+\left(\begin{array}{*{20}{c}}
  e_1\\
  0
  \end{array}\right)u_1,
\end{equation*}
the subsystem associated with $x_P$ is also controllable. For some special choices of weights, the dimension of controllable subspace will exceed the limit $m.$ Whatever the choice of weights is, the dimension of controllable subspace is not less than $m.$
\end{proof}
\begin{remark}
Even the diagonal entries of $L$ take arbitrary real values, Theorem \ref{pathcontrollable} still holds.
\end{remark}
\begin{corollary}\label{shortest}
The dimension of controllable subspace of strong structural controllability is not less than $\max\limits_{j\in\mathcal{V}_F}\min\limits_{i\in\mathcal{V}_L} d(i,j)+1$.
\end{corollary}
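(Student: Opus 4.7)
The plan is a direct reduction to Theorem \ref{pathcontrollable}. First I would fix a follower $j^{*}\in\mathcal{V}_F$ that achieves the outer maximum $\max_{j\in\mathcal{V}_F}\min_{i\in\mathcal{V}_L}d(i,j)=:d^{*}$, together with a leader $i^{*}\in\mathcal{V}_L$ realizing $d(i^{*},j^{*})=d^{*}$. By construction, no leader lies strictly closer to $j^{*}$, and no follower is harder to reach from the leader set than $j^{*}$.

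Next, I would take any shortest path $\mathcal{P}$ from $i^{*}$ to $j^{*}$ in $\mathcal{G}$; it consists of exactly $d^{*}+1$ nodes and has the leader $i^{*}$ at one endpoint. Because $\mathcal{P}$ is a geodesic, no chord can join two non-adjacent vertices of $\mathcal{P}$ in $\mathcal{G}$: any such chord would yield a strictly shorter $i^{*}$-to-$j^{*}$ walk, contradicting $d(i^{*},j^{*})=d^{*}$. Consequently the principal submatrix $L_{\mathcal{P}}$ retains the tridiagonal off-diagonal pattern of a pure path, which is precisely what the partitioning argument inside the proof of Theorem \ref{pathcontrollable} relies upon. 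Regarding $\mathcal{P}$ as a control path issued from the leader $i^{*}$ and invoking Theorem \ref{pathcontrollable} then yields the desired lower bound $d^{*}+1$ on the dimension of the controllable subspace.

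The only subtlety is that the definition of a control path speaks of a leader-to-leaf path, whereas the terminal node $j^{*}$ of the geodesic $\mathcal{P}$ need not be a degree-one vertex of $\mathcal{G}$. I would dispose of this by reading ``leaf'' in the sense actually used inside the proof of Theorem \ref{pathcontrollable}, namely as the non-leader endpoint of the chosen path (the theorem's proof itself says ``the leader node in $\mathcal{P}$ can be regarded as a leaf node''). Under this reading, the argument above is valid and no additional calculation is required; the main obstacle is therefore purely a matter of interpreting the control-path terminology consistently with the proof of the parent theorem.
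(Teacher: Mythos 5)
Your proposal is correct and matches the paper's intent exactly: the paper states that Corollary \ref{shortest} ``can be proved by the same manner as Theorem \ref{pathcontrollable}'', and your reduction -- choosing $j^{*}$ and $i^{*}$ realizing the max-min distance, taking a geodesic as the control path, and reapplying the partitioning argument -- is precisely that manner. Your two added observations (a geodesic is chordless, so $L_{\mathcal{P}}$ keeps the tridiagonal pattern, and the leader endpoint plays the role of the leaf) are details the paper leaves implicit, and they are handled correctly.
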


Corollary \ref{shortest} can be proved by the same manner as Theorem \ref{pathcontrollable}. $d(i,j)$ represents the distance between node $i$ and node $j$, and $\mathcal{V}_F$ denotes the follower set. By Theorem \ref{pathcontrollable} and Corollary \ref{shortest}, it is easy to construct a controllable system by exerting inputs on proper nodes. Once, the paths oriented from leaders cover all of nodes, the system is controllable.

\begin{remark}
The dimension of strongly structurally controllable subspace is the dimension of controllable subspace that can be reached by any choice of weights.
\end{remark}

Taking control paths can only produce sufficient conditions for controllability. As for path graphs, one father node has only one child node, so the connection matrix $H=\alpha\in\mathbb{R}$ is one-dimensional and full rank.  Different to the child nodes of leader nodes, the leader nodes and original inputs can be regarded as an unity of new inputs. Consequently, the child node is also controllable.  If the connection matrix $H\in\mathbb{R}^{m\times m}$ between father nodes and child nodes is full rank, then the new inputs consisting of father nodes and original inputs are nonlinear to child nodes. Thus, these child nodes are also controllable.
\begin{figure}[!h]
  \centering
  \includegraphics[width=1in]{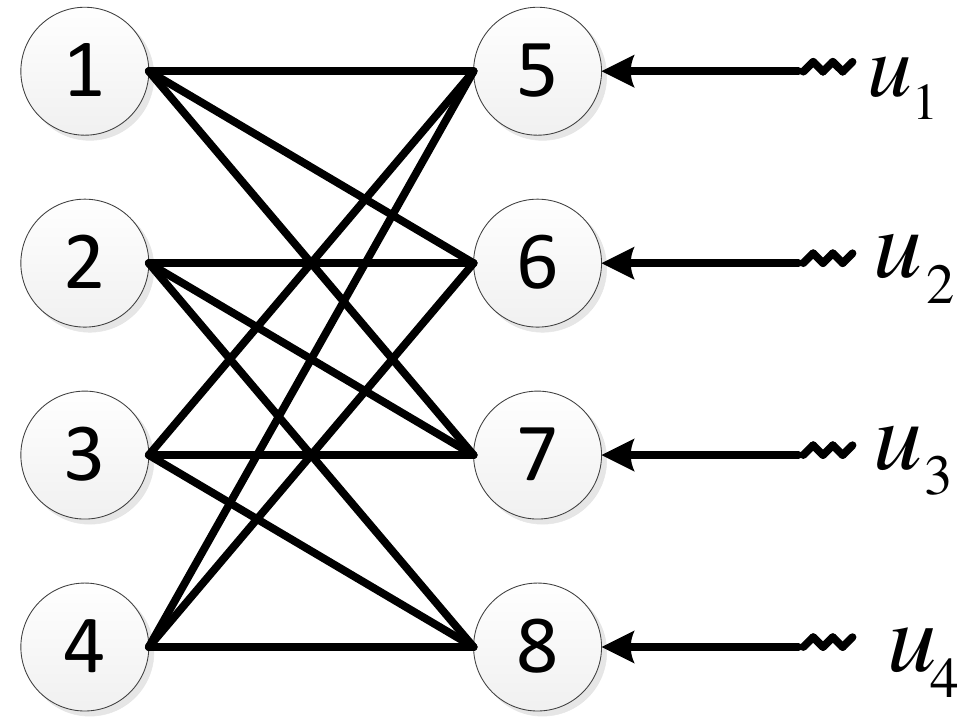}\\
  \caption{Strongly structurally controllable topology.}\label{zeroforce}
\end{figure}

 In particular, the graph in Fig. \ref{zeroforce} does not follow the feature of control paths, the corresponding system is still strongly structurally controllable. The connection matrix between nodes 1, 2, 3, 4 and nodes 5, 6, 7, 8 is
\begin{equation*}\label{connection}
  H=\left(\begin{array}{*{20}{c}}
  {*}&0&{*}&{*}\\
  {*}&*&{0}&{*}\\
  {*}&*&{*}&{0}\\
  {0}&*&{*}&{*}
  \end{array}\right),
\end{equation*}
with $rank(H)=4$, where $*$ represents an arbitrary real value.The rows of $H$ are linearly independent from each other, and the nonlinearity of $H$ brings nonlinear information flows to nodes 1, 2, 3, 4, such that the system is strongly structurally controllable even if $\mathcal{V}_1=\{5,6,7,8\}$ is not a zero forcing set. 

\begin{figure}[!h]
  \centering
  \includegraphics[width=1in]{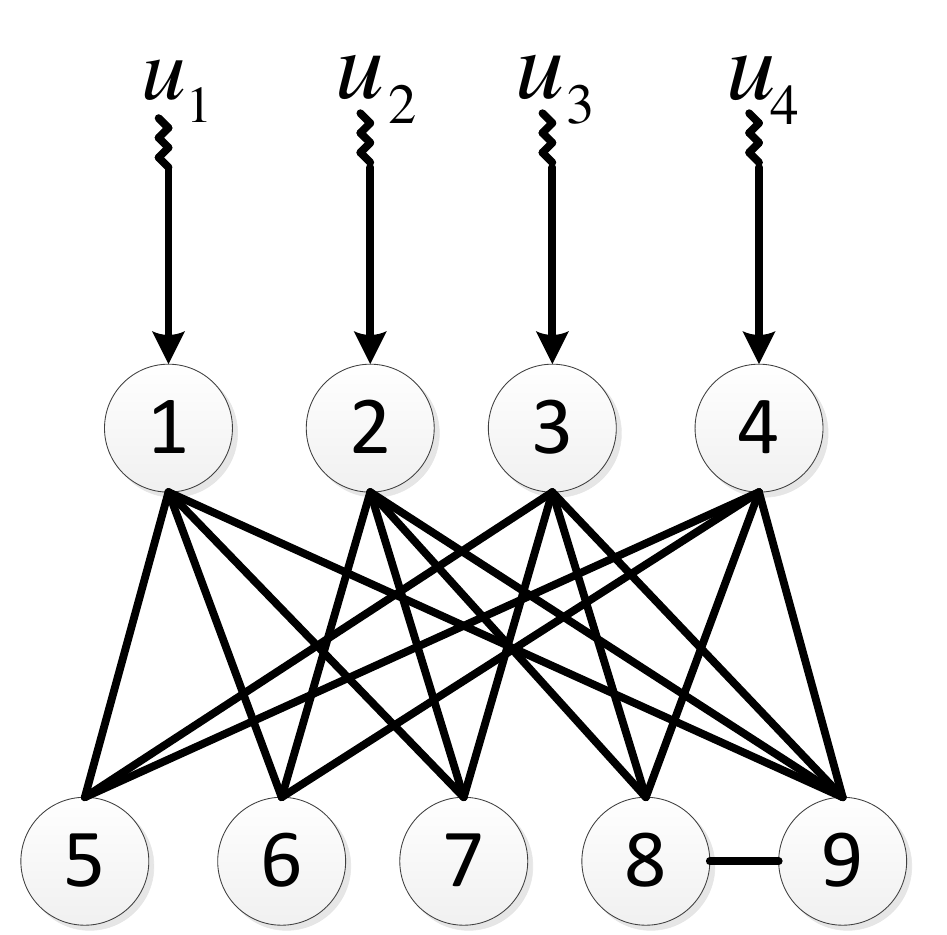}\\
  \caption{Four control nodes and five child nodes.}\label{nonrank}
\end{figure}
The Laplacian matrix of the graph in Fig. \ref{nonrank} is
\begin{equation*}\begin{aligned}
   L=& \left(\begin{array}{*{20}{c}}
  L_C&H^T\\
  H&L_{\neg C}
  \end{array}\right) \\
               =&\left( {\begin{array}{*{20}{cccc|ccccc}}
\textbf{*}&0&0&0&*&0&*&*&*\\
0&\textbf{*}&0&0&*&*&0&*&*\\
0&0&\textbf{*}&0&*&*&*&0&*\\
0&0&0&\textbf{*}&0&*&*&*&*\\
\hline
*&*&*&0&\textbf{*}&0&0&0&0\\
0&*&*&*&0&\textbf{*}&0&0&0\\
*&0&*&*&0&0&\textbf{*}&0&0\\
*&*&0&*&0&0&0&\textbf{*}&*\\
*&*&*&*&0&0&0&*&\textbf{*}
\end{array}} \right),\end{aligned}
\end{equation*}
where $H$ represents the connection matrix between leaders and followers.  $rank(H)=4$ means that the information flows injected into nodes 5, 6, 7, 8 and 9 are linearly dependent. 
Even the information flows oriented from leaders are not linearly independent, the subtopology of follower nodes can be constructed to produce enough linearly independent information flows, such that the system is strongly structurally controllable. 
It can be known that $rank(L_{\neg C}-\lambda I)\geq2$. Thus, $rank\left[\begin{array}{*{20}{c}}
  H&L_{\neg C}-\lambda I\\
  \end{array}\right]=5$, the system is strongly structurally controllable even if $\mathcal{V}_2=\{1,2,3,4\}$ is not a balancing set.

\begin{remark}
From the above arguments, the form of diagonal entries in $L$ does not affect the strong structural controllability. The strong structural controllability of dynamics \eqref{ab} is equal to that of dynamics \eqref{nonab}.
\end{remark}


If we change the labels of some nodes, the structure of topology is unchanged. We call these nodes symmetric nodes. The similar or same communication links induce symmetric nodes, which have the same dynamics because of symmetry.  Once a system contains symmetric nodes, we need to exert inputs on them to ensure strong structural controllability.
\begin{theorem}\label{twocaseofsc}
Systems \eqref{ab} and \eqref{nonab} are strongly structurally controllable if any of the following statements is true.
\begin{enumerate}[i)]
  \item There exists a choice of father nodes and child nodes such that the rank of the connection matrix between each pair of nodes equals to the number of child nodes and there is no symmetric follower node with the leaders being fixed.
  \item The graph under the fixed leaders contains no symmetric follower node and can be reduced into paths rooted from leaders.
\end{enumerate}
\end{theorem}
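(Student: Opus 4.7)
The plan is to verify, for each of the two sufficient conditions, the PBH criterion $rank[\lambda I + L \,|\, B] = n$ for every $\lambda \in \mathbb{C}$ and every admissible weight choice $A \in \mathcal{Q}(\mathcal{G})$. As argued in the preceding remark, the form of the diagonal entries of $L$ does not alter strong structural controllability, so it suffices to carry out the argument for \eqref{ab}; the statement for \eqref{nonab} then follows verbatim.

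For condition (i), I would partition the node set as $\mathcal{V} = \mathcal{V}_L \cup \mathcal{V}_F \cup \mathcal{V}_C$ according to the hypothesised choice of father and child nodes, and write $L$ in the block form
\[
L \;=\; \begin{pmatrix} L_{LL} & L_{LF} & L_{LC} \\ L_{LF}^{T} & L_{FF} & H^{T} \\ L_{LC}^{T} & H & L_{CC} \end{pmatrix},
\qquad B \;=\; \begin{pmatrix} I \\ 0 \\ 0 \end{pmatrix}.
\]
The absence of symmetric follower nodes guarantees that, for every weight assignment in $\mathcal{Q}(\mathcal{G})$, the follower subblock has no forced eigenvalue multiplicity, so Lemma \ref{iff} applies to the rows indexed by $\mathcal{V}_C$ and reduces the PBH test on those rows to a rank test on the corresponding rows of the controllability matrix. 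The rows of $[\lambda I + L \,|\, B]$ indexed by $\mathcal{V}_C$ carry the block $H$ in the columns indexed by $\mathcal{V}_F$, and the hypothesis $rank(H) = |\mathcal{V}_C|$ makes these rows linearly independent of each other and of every other row, regardless of the diagonal shift $\lambda I$. An inductive cascade outward through successive layers of father/child pairs, starting from the leader set, then yields $rank[\lambda I + L \,|\, B] = n$ uniformly in $\lambda$ and in the weights.

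For condition (ii), I would appeal to Theorem \ref{pathcontrollable} and Corollary \ref{shortest} directly. If the graph reduces to a union of paths rooted at the leaders, then every node lies on some control path, and Theorem \ref{pathcontrollable} asserts that each path of length $m_i$ contributes a controllable subspace of dimension at least $m_i$ for every choice of weights. Because no two follower nodes are symmetric, these per-path subspaces cannot collapse onto a common direction, so their joint dimension equals $n$, certifying strong structural controllability.

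The main obstacle will be part (i): the cascading argument has to be made uniform in $\lambda \in \mathbb{C}$ and in the weights simultaneously. The delicate point is that $\lambda$ may coincide with an eigenvalue of a follower subblock; the no-symmetric-follower hypothesis eliminates the symmetry-induced repeated eigenvalues that would defeat Lemma \ref{iff}, but one still has to verify that the full row rank of $H$ preserves independence of the $\mathcal{V}_C$-rows at exactly those critical values of $\lambda$. Making this uniform-in-$\lambda$ rank propagation precise, rather than settling for the generic case, is the step that will demand the most care.
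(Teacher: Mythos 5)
Your treatment of condition (i) is essentially the paper's argument: partition the nodes into successive layers rooted at the leaders, observe that the rows of $[\lambda I+L|B]$ indexed by each new layer carry a full-row-rank connection block in the columns of the previous layer, and propagate full row rank outward, with the no-symmetric-follower hypothesis ruling out collapses among those rows. One correction, though: the paper never routes this through Lemma \ref{iff}, and you should not either. That lemma assumes all eigenvalues of $L$ are simple, and the absence of symmetric follower nodes does not guarantee simplicity of the spectrum for every $A\in\mathcal{Q}(\mathcal{G})$; so the reduction of the PBH test to a rank test on the controllability matrix is not available here. The direct row-independence argument on $[\lambda I+L|B]$, uniform in $\lambda$, is what the paper uses and is the step that actually closes the argument.

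The genuine gap is in your condition (ii). Theorem \ref{pathcontrollable} and Corollary \ref{shortest} give only a \emph{lower bound} on the dimension of the controllable subspace; they do not identify which directions are controllable. Hence "each path of length $m_i$ contributes a controllable subspace of dimension at least $m_i$" cannot simply be summed: the subspaces generated from different leaders may overlap, and your assertion that the absence of symmetric followers prevents them from "collapsing onto a common direction" is precisely the claim that requires proof --- neither cited result delivers it. The paper instead reduces (ii) to (i): when the graph reduces to paths rooted at the leaders, every father node has a single child, so each father--child connection matrix is a $1\times1$ nonzero weight and trivially has rank equal to the number of children; condition (i) then applies verbatim. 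That reduction is shorter and avoids the unjustified dimension count.
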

\begin{proof}
\begin{enumerate}[i)]
  \item Assume that all nodes are partitioned into $m$ parts, and the number of nodes in each part is $n_i$, $i=1,\ldots,m.$ Accordingly, Laplacian matrices $L$ and $L^*$ can be decomposed as
\begin{equation*}
L = \left( {\begin{array}{*{20}{c}}
{{L_1}}&{L_{21}^T}& \cdots &{L_{m1}^T}\\
{{L_{21}}}&{{L_2}}& \cdots &{L_{m2}^T}\\
 \vdots & \vdots & \ddots & \vdots \\
{{L_{m1}}}&{{L_{m2}}}& \cdots &{{L_m}}
\end{array}} \right),\end{equation*}\begin{equation*} L^* = \left( {\begin{array}{*{20}{c}}
{{\bar L_1}}&{\bar L_{21}^T}& \cdots &{\bar L_{m1}^T}\\
{{\bar L_{21}}}&{{\bar L_2}}& \cdots &{\bar L_{m2}^T}\\
 \vdots & \vdots & \ddots & \vdots \\
{{\bar L_{m1}}}&{{\bar L_{m2}}}& \cdots &{{\bar L_m}}
\end{array}} \right).
\end{equation*}
Without loss of generality, we set input matrix $B=\left[ {\begin{array}{*{20}{c}} B_1^T&B_2^T&\cdots&B_m^T\end{array}} \right]^T\in\mathbb{R}^{n\times n_1},$ with $B_1=I_{n_1}$, $B_i=0$, $i=2,\ldots,m.$ If $rank(L_{21})=rank(\bar L_{21})=n_2$, $rank(L_{32})=rank(\bar L_{32})=n_3$, $\cdots,$ $rank(L_{m\cdot{m-1}})=rank(\bar L_{m\cdot{m-1}})=n_m$, and there is not any symmetric follower node, then the rows of $L$ and $L^*$ associated with nodes $n_2$, $\ldots$, $n_m$ are linearly independent whatever the forms of $L_i$, $\bar L_i$ are. Consequently, $[L-\lambda I|B]$ and $[L^*-\lambda I|B]$ are both full row rank for any choice of weights, otherwise there exist symmetric follower nodes in the corresponding topology.
  \item If one graph under fixed inputs has no symmetric follower node and can be reduced into paths rooted from leaders, then the connection matrix between each pair of father and child nodes is full rank. Thus, the system is strongly structurally controllable.
\end{enumerate}
\end{proof}
{\begin{remark}
Zero forcing set in \cite{zeroforce1} is a special case of balancing set \cite{zeroforce2}, and moreover, balancing set is a special case of Theorem \ref{twocaseofsc}.
\end{remark}}

\section{Conclusions}
In this paper, we analyzed the controllability and structural controllability under two protocols. It was shown that special  structures-zero circles, identical nodes and opposite pairs arise more zero eigenvalues of $L^*$ than $L$, which requires more inputs to ensure controllability. For a structurally balanced topology, the controllable subspace remains unchanged even if the edge weights are altered under dynamics $L$. A sufficient and necessary condition for structural controllability of multi-agent system under both protocols was presented. Besides, we derived the sufficient conditions for strong structural controllability of multi-agent systems under both protocols, which indicates how the structures between child nodes and father nodes affect the strong structural controllability. In the future, we will consider the essential controllability which requires the system be controllable under any selection of leaders.


\end{document}